\tikzset{edge/.style = {->,> = stealth'}}
\definecolor{mygray}{RGB}{160,160,160}
\definecolor{CornflowerBlue}{rgb}{0.39, 0.58, 0.93}
\definecolor{LavenderMagenta}{rgb}{0.93, 0.51, 0.93}
\definecolor{PastelOrange}{rgb}{1.0, 0.7, 0.28}
\let\polishlcross=\l
\def\l{\ifmmode\ell\else\polishlcross\fi}
\let\emptyset=\varnothing
\let\theta=\vartheta
\let\rho=\varrho
\let\phi=\varphi
\def\NN{\mathbb N}
\newcommand{\Set}[1]{{\left\lbrace {#1} \right\rbrace}}
\def\set#1:#2{\Set{{#1} \colon {#2}}}
\theoremstyle{plain}
\newtheorem{thm}{Theorem}[section]
\newtheorem{prop}[thm]{Proposition}
\newtheorem{cor}[thm]{Corollary}
\newtheorem{lem}[thm]{Lemma}
\newtheorem{obs}[thm]{Observation}
\theoremstyle{definition}
\title{A star-comb lemma for infinite digraphs}
\author{Florian Reich}
\address{Universit\"at Hamburg, Department of Mathematics, Bundesstrasse 55 (Geomatikum), 20146 Hamburg, Germany}
\email{florian.reich@uni-hamburg.de}
\keywords{connectivity, infinite digraph, star, comb}
\begin{document}

\begin{abstract}
	The star-comb lemma is a standard tool in infinite graph theory, which states that for every infinite set $U$ of vertices in a connected graph $G$ there exists either a subdivided infinite star in $G$ with all leaves in $U$, or an infinite comb in $G$ with all teeth in $U$.
	
	In this paper, we elaborate a counterpart of the star-comb lemma for directed graphs.
	More precisely, we prove that for every infinite set $U$ of vertices in a strongly connected directed graph~$D$, there exists a strongly connected butterfly minor of $D$ with infinitely many teeth in $U$ that is either shaped by a star or shaped by a comb, or is a `chain of triangles'.
\end{abstract}

\maketitle

\section{Introduction}
K\H{o}nig's Infinity Lemma~\cite{konig1927schlussweise} shows that
\emph{stars}, that is graphs $K_{1,\infty}$, and rays, that is one-way infinite paths, are unavoidable subgraphs of infinite connected graphs:
\vspace{.5\baselineskip}
\begin{equation}
	\begin{aligned}\label{Fact:infinite_undirected_simple}
		\parbox{\textwidth-3\parindent}{\emph{
				Every infinite connected graph contains either a star or a ray.}}
	\end{aligned}
	\tag{$\ast$}
\end{equation}

\vspace{.5\baselineskip}
\noindent
The local variant of~\labelcref{Fact:infinite_undirected_simple}, known as the star-comb lemma, is a standard tool in infinite graph theory:
\begin{restatable}{starCombIntro}{undirectedStarComb}  \cite{diestel}*{Lemma 8.2.2}
	For every infinite set $U$ of vertices of a connected graph~$G$ there exists either a subdivided star or a comb in $G$ that has all teeth in $U$.
\end{restatable}
\noindent
A \emph{comb} is the union of a ray $R$ with disjoint (possibly trivial) paths having precisely their first vertex on $R$ and we refer to the last vertices of these paths as the \emph{teeth} of this comb.
We also refer to the leaves of a star as its \emph{teeth}.
Furthermore, we call a vertex $v$ of a comb a \emph{junction} if $v$ has degree $3$ or $v$ is a tooth and has degree $2$.

The unavoidable induced subgraphs of infinite connected graphs are stars, rays and graphs $K_\infty$, that is countably infinite complete graphs, which is an immediate consequence of Ramsey's theorem~\cite{ramsey} and K\H{o}nig's Infinity Lemma~\cite{konig1927schlussweise}.
The study of unavoidable substructures in infinite graphs extends to higher connectivity:
Halin~\cite{halin1978simplicial} studied unavoidable topological minors in uncountably infinite $k$-connected graphs for arbitrary $k \in \NN$.
Oporowski, Oxley and Thomas~\cite{oporowski1993typical} extended Halin's result to minors in countably infinite essentially $k$-connected graphs.
Furthermore, the unavoidable induced subgraphs in infinite $2$-connected graphs were discovered by Allred, Ding and Oporowski~\cite{allred2025unavoidable}.
Recent progress on unavoidable topological minors in infinite $2$- and $3$-connected rooted graphs was made by Nguyen~\cite{nguyen2024unavoidable}.

In the context of directed graphs, the unavoidable substructures have been investigated mainly for finite directed graphs.
Given a graph $G$, let \emph{$\mathcal{D}(G)$} be the directed graph obtained from $G$ by replacing each edge $uv \in E(G)$ by directed edges $(u,v), (v,u)$.
Seymour and Thomassen~\cite{seymour1987characterization} proved that directed graphs $\mathcal{D}(C)$ for odd cycles $C$ are the unavoidable butterfly minors of finite, even directed graphs.
The unavoidable strongly $2$-connected butterfly minors of finite, strongly $2$-connected directed graphs were characterised by Wiederrecht~\cite{wiederrecht2020digraphs}.
Furthermore, the author showed in the first paper of this series:
\begin{thm}[\cite{finite}*{Theorem 1.1}]
	There exists a function $f$ such that for every $n \in \NN$ and for every set $U$ of vertices of strongly connected directed graph $D$ with $|U| \geq f(n)$ there is a butterfly minor of $D$ that is either shaped by a star or shaped by a comb, and has $n$ teeth in $U$.
\end{thm}
\noindent
This result implies that directed cycles, directed graphs $\mathcal{D}(K_{1,n})$ and directed graphs $\mathcal{D}(P_n)$ for some path $P_n$ are the unavoidable strongly connected butterfly minors of finite, strongly connected directed graphs~\cite{finite}.

For infinite directed graphs only the following result by B\"urger and Melcher~\cite{burger2020ends}*{Lemma 3.2} was known:

\begin{lem}\label{lem:buerger_melcher}
	For every infinite set $U$ of vertices of a strongly connected directed graph $D$, there exists an infinite in-arborescence (out-arborescence) in $D$, whose leaves are in $U$ and whose underlying undirected graph is either a subdivided star or a comb.
\end{lem}
\noindent
\cref{lem:buerger_melcher} is a direct consequence of the star-comb lemma, but unlike the star-comb lemma, it does not preserve the connectivity of its host graph.

In this paper we take on the task of investigating the unavoidable strongly connected substructures of infinite strongly connected directed graphs.
More precisely, we transfer the star-comb lemma to strongly connected directed graphs and obtain a directed variant of~\labelcref{Fact:infinite_undirected_simple} as a consequence.
As in the finite setting, we express the unavoidable substructures in terms of butterfly minors.
The unavoidable strongly connected butterfly minors are more complex than stars and combs from the star-comb lemma since we have to preserve two directed paths, namely a $v$--$w$~path and a $w$--$v$~path, for each pair of vertices $v, w$.

The union of an out-oriented ray $R$ rooted at $r$ and edges $(e_v)_{v \in V(R) \setminus \{r\}}$, where $e_v$ has head $r$ and tail $v$, is called a \emph{dominated directed ray}.
Furthermore, the union of an in-oriented ray $R$ rooted at $r$ and edges $(e_v)_{v \in V(R) \setminus \{r\}}$, where $e_v$ has head $v$ and tail $r$, is also called a \emph{dominated directed ray}.
See \cref{fig:ddray}.

\begin{figure}[ht]
	\begin{tikzpicture}

		\draw[edge] (0.1,1) to (0.9,1);
		\draw[edge] (1.1,1) to (1.9,1);
		\draw[edge] (2.1,1) to (2.9,1);
		\draw[edge] (3.1,1) to (3.9,1);
		
		\draw[edge,path fading=east] (4.1,1) to (5,1);
		\draw[edge,path fading=east] (5,1) to[bend right] (0,1.1);

		\draw[edge] (1,1.1) to[bend right] (0,1.1);
		\draw[edge] (2,1.1) to[bend right] (0,1.1);
		\draw[edge] (3,1.1) to[bend right] (0,1.1);
		\draw[edge] (4,1.1) to[bend right] (0,1.1);
		
		\draw[LavenderMagenta] (0,1) node {\huge.};
		\draw[LavenderMagenta] (1,1) node {\huge.};
		\draw[LavenderMagenta] (2,1) node {\huge.};
		\draw[LavenderMagenta] (3,1) node {\huge.};
		\draw[LavenderMagenta] (4,1) node {\huge.};

		\draw[edge] (6.9,1) to (6.1,1);
		\draw[edge] (7.9,1) to (7.1,1);
		\draw[edge] (8.9,1) to (8.1,1);
		\draw[edge] (9.9,1) to (9.1,1);

		\draw[edge] (6,1.1) to[bend left] (7,1.1);
		\draw[edge] (6,1.1) to[bend left] (8,1.1);
		\draw[edge] (6,1.1) to[bend left] (9,1.1);
		\draw[edge] (6,1.1) to[bend left] (10,1.1);
		
		\draw[LavenderMagenta] (6,1) node {\huge.};
		\draw[LavenderMagenta] (7,1) node {\huge.};
		\draw[LavenderMagenta] (8,1) node {\huge.};
		\draw[LavenderMagenta] (9,1) node {\huge.};
		\draw[LavenderMagenta] (10,1) node {\huge.};
		
		\draw[edge,path fading=east] (11,1) to (10.1,1);
		\draw[edge] (10.3,1) to (10.1,1);
		\draw[edge,path fading=east] (6,1.1) to[bend left] (11,1);

	\end{tikzpicture}
	\caption{Dominated directed rays.}
	\label{fig:ddray}
\end{figure}

We say that a directed graph $D$ is \emph{shaped by a star} (see~\cref{fig:ddray,fig:shaped_by_star}) if either $D$ is a dominated directed ray or there exists a subdivided star $S$ such that
\begin{itemize}
	\item $D= \mathcal{D}(S)$, or
	\item $D$ is obtained from $\mathcal{D}(S)$ by replacing the unique vertex $w \in V(\mathcal{D}(S))$ of infinite degree by a dominated directed ray $R$ with $V(R)=\{w_n: n \in \mathrm{N}_S(w)\}$ such that the edges $(w,n), (n,w)$ become incident with $w_n$ for every $n \in \mathrm{N}_{S}(w)$.
\end{itemize}

\begin{figure}[ht]
	\begin{tikzpicture}
		
		\foreach \a in {1.2,1.9,2.4,3.7}{
			
			\draw[edge, opacity=(1/\a)] ({0.9*cos(360*(1/\a))},{0.9*sin(360*(1/\a))}) to ({0.1*cos(360*(1/\a))},{0.1*sin(360*(1/\a))});
			\draw[edge, opacity=(1/\a)] ({0.1*cos(360*(1/\a))},{0.1*sin(360*(1/\a))}) to ({0.9*cos(360*(1/\a))},{0.9*sin(360*(1/\a))});
			\draw[black, opacity=(1/\a)]  ({1*cos(360*(1/\a))},{1*sin(360*(1/\a))}) node {\huge .};
			\draw[edge, opacity=(1/\a)] ({1.9*cos(360*(1/\a))},{1.9*sin(360*(1/\a))}) to ({1.1*cos(360*(1/\a))},{1.1*sin(360*(1/\a))});
			\draw[edge, opacity=(1/\a)] ({1.1*cos(360*(1/\a))},{1.1*sin(360*(1/\a))}) to ({1.9*cos(360*(1/\a))},{1.9*sin(360*(1/\a))});
			\draw[LavenderMagenta, opacity=(1/\a)]  ({2*cos(360*(1/\a))},{2*sin(360*(1/\a))}) node {\huge .};
		}
		
		\foreach \a in {1,1.5,3,4.5,5.4}{
			
			\draw[edge, opacity=(1/\a)] ({1.9*cos(360*(1/\a))},{1.9*sin(360*(1/\a))}) to ({0.1*cos(360*(1/\a))},{0.1*sin(360*(1/\a))});
			\draw[edge, opacity=(1/\a)] ({0.1*cos(360*(1/\a))},{0.1*sin(360*(1/\a))}) to ({1.9*cos(360*(1/\a))},{1.9*sin(360*(1/\a))});
			\draw[LavenderMagenta, opacity=(1/\a)]  ({2*cos(360*(1/\a))},{2*sin(360*(1/\a))}) node {\huge .};
		}
		
		\draw (0,0) node {\huge.};
		
		\draw[edge] (4.1,0) to (4.9,0);
		\draw[edge] (5.1,0) to (5.9,0);
		\draw[edge] (6.1,0) to (6.9,0);
		\draw[edge] (7.1,0) to (7.9,0);
		
		\draw[edge,path fading=east] (8.1,0) to (9,0);
		\draw[edge,path fading=east] (9,0) to[bend right] (4,0.1);

		\draw[edge] (5,0.1) to[bend right] (4,0.1);
		\draw[edge] (6,0.1) to[bend right] (4,0.1);
		\draw[edge] (7,0.1) to[bend right] (4,0.1);
		\draw[edge] (8,0.1) to[bend right] (4,0.1);
		
		\draw[black] (4,0) node {\huge.};
		\draw[black] (5,0) node {\huge.};
		\draw[black] (6,0) node {\huge.};
		\draw[black] (7,0) node {\huge.};
		\draw[black] (8,0) node {\huge.};
		
		\draw[LavenderMagenta] (4,-2) node {\huge.};
		\draw[black] (5,-1) node {\huge.};
		\draw[LavenderMagenta] (5,-2) node {\huge.};
		\draw[LavenderMagenta] (7,-2) node {\huge.};
		\draw[black] (6,-1) node {\huge.};
		\draw[LavenderMagenta] (6,-2) node {\huge.};
		\draw[LavenderMagenta] (8,-2) node {\huge.};
		
		\draw[edge] (4,-0.1) to (4,-1.9);
		\draw[edge] (4,-1.9) to (4,-0.1);
		
		\draw[edge] (5,-0.1) to (5,-0.9);
		\draw[edge] (5,-0.9) to (5,-0.1);
		\draw[edge] (5,-1.1) to (5,-1.9);
		\draw[edge] (5,-1.9) to (5,-1.1);
		
		\draw[edge] (6,-0.1) to (6,-0.9);
		\draw[edge] (6,-0.9) to (6,-0.1);
		\draw[edge] (6,-1.1) to (6,-1.9);
		\draw[edge] (6,-1.9) to (6,-1.1);
		
		\draw[edge] (7,-0.1) to (7,-1.9);
		\draw[edge] (7,-1.9) to (7,-0.1);
		
		\draw[edge] (8,-0.1) to (8,-1.9);
		\draw[edge] (8,-1.9) to (8,-0.1);
		
	\end{tikzpicture}
	\caption{Two directed graphs shaped by a star.}
	\label{fig:shaped_by_star}
\end{figure}

\noindent
A directed graph $D$ is \emph{shaped by a comb} (see~\cref{fig:shaped_by_comb}) if there exists a comb $C$ such that either
\begin{itemize}
	\item $D= \mathcal{D}(C)$, or
    \item $D$ is obtained from $\mathcal{D}(C)$ by replacing each junction $j$ of $C$ by a directed cycle of length $3$ such that, if $d_C(j)=3$, the three strong components of $\mathcal{D}(C)-j$ become incident with distinct vertices of this directed cycle and otherwise the tooth $j$ is a vertex of this directed cycle and the two strong components of $\mathcal{D}(C)-j$ become incident with distinct other vertices of this directed cycle.
\end{itemize}
\begin{figure}[ht]
	\begin{tikzpicture}
		
		\draw[edge][] (0.25,0) to (0.75,0);
		\draw[edge] ({0.75},0) to ({0.25},0);
		\draw[edge] ({0.2},0.05) to ({0.2},0.95);
		\draw[edge] ({0.2},0.95) to ({0.2},0.05);
		\draw[] ({0.2},0) node {.};
		
		\foreach \a in {1, 2, 4}{

			\draw[LavenderMagenta] (\a,1) node {\huge.};

			\draw[edge] ({\a-0.15},0) to ({\a+0.15},0);
			\draw[edge] ({\a+0.15},0.05) to ({\a+0.05},0.35);
			\draw[edge] ({\a-0.05},0.35) to ({\a-0.15},0.05);

			\draw[edge] ({\a},0.45) to ({\a},0.95);
			\draw[edge] ({\a},0.95) to ({\a},0.45);
			
			\draw[] ({\a-0.2},0) node {.};
			\draw[] ({\a+0.2},0) node {.};
			\draw[] ({\a},0.4) node {.};	
		}
		
		\foreach \a in {1,2, 3}{
			
			\draw[edge] ({\a+0.25},0) to ({\a+0.75},0);
			\draw[edge] ({\a+0.75},0) to ({\a+0.25},0);
		}
	
		\draw[] (2.8,0) node {.};
		\draw[] (3.2,0) node {.};
		\draw[LavenderMagenta] (3,0.4) node {\huge.};
		\draw[edge] (3.15,0.05) to (3.05,0.35);
		\draw[edge] (2.95,0.35) to (2.85,0.05);
		\draw[edge] (2.85,0) to (3.15,0);
		
		\draw[edge] ({4.35},0) to ({4.25},0);
		\draw[edge,path fading=east] ({4.75},0) to ({4.25},0);
		
		
		\foreach \a in {-6, -2}{
			\draw[LavenderMagenta] (\a,1) node {\huge.};
			\draw[] (\a,0) node {.};	
			
			\draw[edge] ({\a},0.05) to ({\a},0.95);
			\draw[edge] ({\a},0.95) to ({\a},0.05);
			
		}
	
		\draw[LavenderMagenta] (-3,0) node {\huge.};
		
		\foreach \a in {-5,-4}{
			\draw[LavenderMagenta] (\a,1) node {\huge.};
			\draw[black] (\a,0.5) node {.};
			\draw[] (\a,0) node {.};	
			
			\draw[edge] ({\a},0.05) to ({\a},0.45);
			\draw[edge] ({\a},0.45) to ({\a},0.05);
			\draw[edge] ({\a},0.55) to ({\a},0.95);
			\draw[edge] ({\a},0.95) to ({\a},0.55);
			
		}
		
		\foreach \a in {-6, ..., -3}{
			
			\draw[edge] ({\a+0.05},0) to ({\a+0.95},0);
			\draw[edge] ({\a+0.95},0) to ({\a+0.05},0);
		}
		
		\draw[LavenderMagenta] (0.2,1) node {\huge.};
		
		\draw[edge] ({-1.85},0) to ({-1.95},0);
		\draw[edge,path fading=east] ({-1.5},0) to ({-1.95},0);

	\end{tikzpicture}
	\caption{Two directed graphs shaped by a comb.}
	\label{fig:shaped_by_comb}
\end{figure}
\noindent
Let $T$ be either a subdivided star or a comb. The \emph{teeth} of a directed graph shaped by $T$ are the vertices that correspond to teeth of $T$.
Furthermore, we call all vertices of a dominated directed ray \emph{teeth}.

A \emph{chain of triangles} (see~\cref{fig:chain_of_triangles}) is the union of the directed graph with vertex set $\bigcup_{i \in \NN} \{a_i,b_i,c_i\}$ and edge set 
\begin{align*}
	\{(a_i, b_i): i \in \NN \} \cup \{(b_i, c_i): i \in \NN \} \cup \{(c_i, a_i): i \in \NN \} \\ \cup \{(a_{i+1},a_i): i \in \NN \} \cup \{(c_{i},c_{i+1}): i \in \NN \}
\end{align*}
and disjoint \emph{double paths} $b_iF_iy_i$, that is directed graphs $\mathcal{D}(P)$ for a path $P$, for every $i \in \NN$ each intersecting $\bigcup_{i \in \NN} \{a_i,b_i,c_i\}$ only in $b_i$.
We call the vertices $(y_i: i \in \NN)$ the \emph{teeth} of this chain of triangles.

\begin{figure}[ht]
	\begin{tikzpicture}
		
		\draw[edge] (1.1,2) to (1.9,2);
		\draw[edge] (1.9,2) to (1.1,2);
		\draw[LavenderMagenta] (2,2) node {\huge.};

		\draw[edge] (9.1,2) to (9.9,2);
		\draw[edge] (9.9,2) to (9.1,2);
		\draw[LavenderMagenta] (10,2) node {\huge.};
		\draw[edge] (10.1,2) to (10.9,2);
		\draw[edge] (10.9,2) to (10.1,2);
		\draw[LavenderMagenta] (11,2) node {\huge.};
		
		\foreach \a in {0,4,8}{
			\draw[edge] ({\a+0.1},1.05) to ({\a+0.9},1.9);
			\draw[edge] (\a+0.9,2.1) to ({\a+0.1},2.9);
			\draw[edge] (\a,2.9) to ({\a},1.1);
			
			\draw[LavenderMagenta] (\a,1) node {\huge.};
			\draw[LavenderMagenta] ({\a+1},2) node {\huge.};
			\draw[LavenderMagenta] (\a,3) node {\huge.};
		}
		
		\foreach \a in {0,4}{
			\draw[edge] ({\a+0.1},3) to ({\a+3.9},3);
			\draw[edge] ({\a+3.9},1) to ({\a+0.1},1);
		}
		
		\draw[edge, path fading=east] ({8.1},3) to ({11.9},3);
		\draw[edge, path fading=east] ({11.9},1) to ({8.1},1);
		\draw[edge] ({8.2},1) to ({8.1},1);
		
		\foreach \a in {1,2,3}{
			\draw ({4*\a-4},0.7) node {$a_\a$};
			\draw ({4*\a-3.4},2) node {$b_\a$};
			\draw ({4*\a-4},3.3) node {$c_\a$};
		}
		\draw (2.4,2) node {$y_1$};
		\draw (5.4,2) node {$y_2$};
		\draw (11.4,2) node {$y_3$};

	\end{tikzpicture}
	\caption{A chain of triangles.}
	\label{fig:chain_of_triangles}
\end{figure}

Note that directed graphs shaped by a star or shaped by a comb and chains of triangles are strongly connected.
Our main result reads as follows:
\begin{restatable}{thm}{infinite}\label{main_theorem}
	For every infinite set $U$ of vertices of a strongly connected directed graph $D$ there exists a butterfly minor of $D$ that is either shaped by a star, shaped by a comb or is a chain of triangles, and has all teeth in $U$.
\end{restatable}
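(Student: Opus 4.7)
My approach is to bootstrap from the classical undirected star-comb lemma applied to the underlying graph of $D$, and then use strong connectivity of $D$ to supply the ``return'' directed paths that promote the resulting star or comb to a strongly connected substructure. First I apply the undirected star-comb lemma to the underlying graph of $D$ with the infinite set $U$, obtaining either a subdivided star $S$ or a comb $C$ in the underlying graph with all teeth in $U$. Each edge of this undirected substructure is realised in $D$ by a nonempty subset of its two possible orientations, so I three-colour the edges by \emph{forward only}, \emph{backward only}, or \emph{both}. By a pigeonhole/Ramsey argument along each branch of $S$, or along the spine and the tooth-paths of $C$, I may pass to an infinite subfamily of teeth along whose supporting paths the colour sequences are uniform.

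In the star case, if every branch of the normalised star retains both orientations then contracting the internal degree-one vertices yields a butterfly minor of the form $\mathcal{D}(S)$, which is shaped by a star. Otherwise the branches are each one-way oriented, say outward from the center $c$, and strong connectivity supplies for each tooth $u_i$ a directed $u_i$--$c$ path $Q_i$ in $D$. A further pigeonhole on the first vertex where each $Q_i$ meets $S \cup \bigcup_{j<i}Q_j$ leads to two subcases: either the $Q_i$ can be arranged to be internally disjoint from $S$ and from each other, so that butterfly contraction yields a shaped-by-star minor; or the $Q_i$ accumulate along a common back-ray, in which case the extracted structure is either a dominated directed ray or a star whose center is replaced by a dominated directed ray.

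The comb case follows the same template along the spine of $C$ and is where the chain of triangles emerges. After normalising the spine to be one-way oriented, say $r_0 \to r_1 \to r_2 \to \cdots$, strong connectivity produces return paths $Q_i$ from $r_i$ back towards $r_0$, and an iterative extraction yields one of three outcomes: a $\mathcal{D}(C)$-type butterfly minor when both spine orientations survive; a shaped-by-comb minor when a counter-ray can be extracted whose meetings with the spine supply the required triangular cycles at each junction; or a chain of triangles when the counter-ray exists but meets the forward ray only at isolated points. The main obstacle will be this last case, where one must execute butterfly contractions legally at each overlap while preserving the tooth-paths, and diagonalise so that infinitely many triangles with distinct teeth in $U$ survive in the limit; strong connectivity is invoked at each finite stage to build the next triangle $a_i b_i c_i$, and the key inductive invariant is that the partial substructure already constructed is strongly connected and retains infinitely many of the remaining teeth in its \emph{tail}.
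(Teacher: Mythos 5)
Your very first move already has a serious gap: you apply the undirected star--comb lemma to the \emph{underlying undirected graph} of $D$, and then try to recover directional coherence by three-colouring edges and applying Ramsey ``along each branch''. This does not work. A single branch of a subdivided star is a fixed finite path, so there is nothing to pigeonhole over, and its edge-orientations in $D$ can alternate arbitrarily; similarly, Ramsey applied to a sequence of spine edges gives an infinite monochromatic subset of \emph{positions}, not a monochromatic infinite subpath of the spine, so you do not obtain a one-way oriented sub-ray that is still a subgraph of $D$. The paper avoids this entirely by first invoking strong connectivity to build a spanning out-arborescence $T$ of $U$ (\cref{prop:spanning_arborescences}) and applying the star--comb lemma \emph{to $T$}; since $T$ is already direction-coherent, the resulting star or comb is automatically an out-star or an out-comb in $D$. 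Without that move, the ``normalisation'' you describe has no valid implementation.

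The second gap is in your handling of the return paths $Q_i$. You say a ``further pigeonhole on the first vertex where each $Q_i$ meets $S\cup\bigcup_{j<i}Q_j$'' leads either to internally disjoint return paths or to accumulation along a common back-ray. But the genuinely hard interactions are between $Q_i$ and the \emph{forward} paths $P_j$, not just among the $Q_i$'s, and these cannot be disentangled by pigeonhole alone: the paper needs the laced-path machinery (\cref{prop:laced_paths}, \cref{obs:double_path}) and the delicate rerouting/contraction argument of \cref{lem:star_or_back_path,lemhelp} to turn intersecting $P_i,Q_i$ into butterfly-contractible double paths. You also need, before that, the dichotomy of \cref{prop:in_tree_or_knit_ray} (in-arborescence back to the ray versus a knit in-ray) to split into the four centre types of \cref{lemgroundwork2}; your proposal never produces anything resembling the knit-ray/chain-of-cycles centre, which is precisely the case that yields the shaped-by-comb minor. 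Relatedly, your assignment of outcomes to cases is inverted: in the paper the chain of triangles arises when the out-ray and in-ray are \emph{disjoint} apart from the root (type~\labelcref{itm:centre_2}), and the shaped-by-comb minor arises when they are \emph{knit} (type~\labelcref{itm:centre_4}), whereas you place the chain of triangles in the ``meet at isolated points'' subcase. The final paragraph of your proposal explicitly defers the core butterfly-contraction and diagonalisation difficulties, which are exactly the content of Sections~\ref{sec:forcing_stars} and~\ref{sec:proof}; as written, the proposal is an outline with the hard steps missing rather than a proof.
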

\noindent
We derive a variant of~\labelcref{Fact:infinite_undirected_simple} for directed graphs from \cref{main_theorem}:
\begin{cor}
	Every infinite strongly connected directed graph contains $\mathcal{D}(K_{1, \infty}), \mathcal{D}(R)$ for an undirected ray $R$ or a dominated directed ray as a butterfly minor.
\end{cor}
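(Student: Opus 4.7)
The plan is to apply \cref{main_theorem} to $D$ with $U := V(D)$, which is infinite by hypothesis. This yields a butterfly minor $M$ of $D$ that is shaped by a star, shaped by a comb, or is a chain of triangles. Since butterfly minors compose, it then suffices to exhibit one of $\mathcal{D}(K_{1,\infty})$, $\mathcal{D}(R)$, or a dominated directed ray as a butterfly minor of each possible $M$.

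If $M$ is shaped by a star, then either $M$ already is or contains a dominated directed ray as a subgraph (in the variant where the center is replaced by a dominated directed ray), and we are done; otherwise $M = \mathcal{D}(S)$ for a subdivided star $S$. In this last subcase I extract $\mathcal{D}(K_{1,\infty})$ by absorbing each subdivision branch into its leaf: along a branch $w, x_1, \dots, x_k, n$ of $S$ the edge $(x_k, n)$ is the unique incoming edge of $n$ in $\mathcal{D}(S)$, so the butterfly rule applies, and after the contraction $(x_{k-1}, n)$ inherits the same property, so the procedure chains up to the center $w$.

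If $M$ is shaped by a comb $C$ without triangle replacements, then $\mathcal{D}(R)$ for the spine ray $R$ of $C$ is already a subgraph of $M$. With triangle replacements, let $v_L, v_R, v_T$ denote the three triangle vertices at a junction $j$, corresponding respectively to the left-spine, right-spine, and tooth-side strong components of $\mathcal{D}(C) - j$. I would delete the entire tooth-side component; then $v_T$ has a unique outgoing edge (a triangle edge), which the butterfly rule allows us to contract, and a short case analysis on the cyclic orientation of the triangle shows that this contraction creates an antiparallel pair between the two remaining vertices $v_L, v_R$. Performing this at every junction converts the spine into $\mathcal{D}(R')$ for a longer undirected ray $R'$.

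Finally, if $M$ is a chain of triangles with vertices $\{a_i, b_i, c_i\}_{i \in \NN}$ and tooth paths $b_iF_iy_i$, I construct a dominated directed ray. First delete every tooth path $F_iy_i$; this makes $(b_i, c_i)$ the unique outgoing edge of $b_i$, so we contract $b_i$ into $c_i$, which causes $c_i$ to inherit an incoming edge $(a_i, c_i)$. Next delete these inherited edges $(a_i, c_i)$ for $i \geq 2$, making $(a_i, a_{i-1})$ the unique outgoing edge of each $a_i$ with $i \geq 2$, and iteratively contract every such $a_i$ into $a_1$. The resulting minor has the out-oriented ray $a_1 \to c_1 \to c_2 \to \cdots$ together with an inherited edge $(c_i, a_1)$ for every $i$, which is precisely a dominated directed ray. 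The main obstacle throughout is identifying the right edges to delete in the two triangle-adorned cases so that the unique-outgoing or unique-incoming condition of the butterfly rule applies while the antiparallel or dominating structure we aim to produce survives all the contractions.
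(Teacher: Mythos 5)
Your argument is correct. The paper states this corollary without proof, so there is no proof of record to compare against; your derivation — invoking \cref{main_theorem} with $U = V(D)$ and then reducing each of the three shapes of butterfly minor to one of the three target structures via (in each case an explicit) sequence of deletions and butterfly contractions — is the natural and sound way to obtain it. I verified the three reductions: absorbing subdivision branches into leaves to reach $\mathcal{D}(K_{1,\infty})$; deleting tooth-side components of the comb and contracting the degree-two triangle vertex to turn each junction triangle into an antiparallel pair, yielding $\mathcal{D}(R)$; and in the chain-of-triangles case merging $b_i$ into $c_i$, deleting the inherited edges $(a_i,c_i)$ for $i\geq 2$, and collapsing the in-ray $\cdots\to a_2\to a_1$ into a single vertex $A$, which leaves the out-ray $A\to c_1\to c_2\to\cdots$ together with back-edges $(c_i,A)$ — exactly a dominated directed ray. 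One small remark: the paper's observation about butterfly-contracting edges is phrased for a finite sequence of contractions, whereas yours are infinite; this is harmless here (and the paper itself uses infinite contraction sequences elsewhere) because in each case the resulting branch sets are visibly of the required in-/out-arborescence form — e.g.\ in the chain-of-triangles case $\mu(A)$ is the in-ray $\cdots\to a_2\to a_1$ with trivial out-arborescence $\{a_1\}$ — so the tree-like model is immediate.
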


We show in~\cref{sec:necessity} that all six types of directed graphs in the statement of~\cref{main_theorem} are indeed necessary. Note that if we allow strong minors instead of butterfly minors in~\cref{main_theorem} only directed graphs $\mathcal{D}(K_{1, \infty})$, $\mathcal{D}(R)$ for an undirected ray $R$ and dominated directed rays are necessary, since all other types can be strongly contracted to one of these types.

This paper is organised as follows:
We introduce notations and basic properties of directed graphs in~\cref{sec:preliminaries}.
In~\cref{sec:centre_infinite} we prove the existence of certain strongly connected subgraphs which is the basis for the proof of~\cref{main_theorem}.
In~\cref{sec:forcing_stars} we provide sufficient conditions for the existence of butterfly minors shaped by a star and prove~\cref{main_theorem} in~\cref{sec:proof}.
Finally, we argue in~\cref{sec:necessity} that all six types of butterfly minors in~\cref{main_theorem} are indeed necessary.

\section{Preliminaries}\label{sec:preliminaries}
For standard notations we refer to Diestel's book~\cite{diestel}. We set $[\ell]:= \{1, \dots, \ell\}$ for $\ell \in \NN$.
A \emph{directed $x_1$--$x_\ell$~walk} of a directed graph is an alternating sequence $W=x_1e_2x_2 \dots x_{\ell -1} e_{\ell - 1} x_\ell$ of vertices $(x_i: i \in [\ell])$ and edges $(e_i: i \in [\ell - 1])$ such that $x_i$ is the tail of $e_i$ and $x_{i+1}$ is the head of $e_i$.
We call also the directed graph induced by $W$ \emph{directed walk}.
We say that $x_1$ is the \emph{startvertex} and $x_\ell$ the \emph{endvertex} of $W$.
Further, we call the vertices $x_2, \dots, x_{\ell - 1}$ the \emph{internal vertices} of the directed walk $W$.
The directed walk $W$ is a \emph{directed $X$--$Y$~walk} for two given sets $X, Y$ of vertices if $x_1 \in X$, $x_\ell \in Y$ and the internal vertices of $W$ are not contained in $X \cup Y$.
Furthermore, a \emph{directed $A$--$B$~walk} for subgraphs $A, B$ is a $V(A)$--$V(B)$~walk.
If all vertices $(x_i: i \in [\ell])$ are disjoint and all edges $(e_i: i \in [\ell - 1])$ are disjoint, we call $W$ a \emph{directed path}.

Next, we define an order on the vertex set of a directed path $P$ as follows: given $v, w \in V(P)$, we say $v \leq_P w$ if $w\in V(vP)$.
We define $Px$ to be the initial segment of a directed path $P$ ending in a vertex $x \in V(P)$ and define $xP$ analogously.
Furthermore, $P x Q$ refers to the directed path obtained by the concatenation of $Px$ and $xQ$ along $x$, given some directed path $Q$ with $x \in V(P) \cap V(Q)$.
A directed graph $D$ is called \emph{strongly connected}, if for every $u, v \in V(D)$ there exists a directed $u$--$v$~path in $D$.

A \emph{double path} is the directed graph $\mathcal{D}(P)$ obtained for an undirected path $P$.
A \emph{directed cycle} is a sequence $C = x_1e_2x_2 \dots x_{\ell -1} e_{\ell - 1} x_\ell$ of distinct vertices $(x_i: i \in [\ell - 1])$ with $x_1 = x_\ell$ and distinct edges $(e_i: i \in [\ell - 1])$ such that $x_i$ is the tail of $e_i$ and $x_{i+1}$ is the head of $e_i$.

In this paper we make repeatedly implicit use of Ramsey's theorem:
\begin{thm}{\cite{ramsey}} \label{ramsey2}
	Every complete infinite graph with a $k$-colouring of the edges contains an infinite monochromatic clique.
\end{thm}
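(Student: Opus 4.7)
The plan is to construct the desired monochromatic clique by an iterative thinning procedure combined with a final application of the finite pigeonhole principle. Let $V$ denote the vertex set of the infinite complete graph and let $c \colon \binom{V}{2} \to [k]$ be the colouring. I would build inductively a strictly increasing sequence of vertices $v_1, v_2, \dots$, a nested decreasing sequence of infinite vertex sets $V \supseteq V_0 \supseteq V_1 \supseteq \dots$, and a sequence of colours $\chi_1, \chi_2, \dots \in [k]$, subject to the invariant that $v_n \in V_{n-1}$ and that every edge from $v_n$ to a vertex of $V_n$ has colour $\chi_n$.

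To perform the inductive step, start with $V_0 := V$, and having produced $V_{n-1}$, pick any $v_n \in V_{n-1}$ and consider the edges from $v_n$ to $V_{n-1} \setminus \{v_n\}$. Since $V_{n-1}$ is infinite and there are only $k$ colours, finite pigeonhole yields a colour $\chi_n \in [k]$ such that the set
\[
V_n := \set{u \in V_{n-1} \setminus \{v_n\}}: {c(\{v_n, u\}) = \chi_n}
\]
is infinite, which maintains the invariant. Once the sequences are constructed, apply finite pigeonhole once more to the infinite sequence $(\chi_n)_{n \in \NN}$: some colour $\chi \in [k]$ appears for infinitely many indices $n$, say along an infinite subset $I \subseteq \NN$. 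I would then claim that $W := \set{v_n}: {n \in I}$ is the desired infinite monochromatic clique.

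To verify monochromaticity, take any two indices $m < n$ in $I$. By construction $v_n \in V_{n-1} \subseteq V_m$, and every vertex of $V_m$ is joined to $v_m$ by an edge of colour $\chi_m = \chi$; hence $c(\{v_m, v_n\}) = \chi$. Thus all edges inside $W$ have colour $\chi$, and $W$ is infinite since $I$ is.

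The argument has no genuine obstacle: the only subtlety is bookkeeping, namely that one must thin the ambient set \emph{before} choosing the next vertex, so that the colour from $v_n$ is already decided on all later candidates. An alternative plan would be induction on $k$, reducing the $k$-colour case to the $2$-colour case by merging colours, but the direct construction above is shorter and is the version most naturally cited in the sequel of the paper.
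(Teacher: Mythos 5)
Your proof is correct, and it is the standard iterative thinning-plus-pigeonhole argument; the paper itself gives no proof but cites Diestel, where essentially this same construction appears. Nothing to fix.
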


\subsection{Arborescences}
We call a directed graph an \emph{in-arborescence}, if its underlying undirected graph is a rooted tree and all edges are directed towards the root.
The terms root and leaves transfer from the underlying undirected graph to the in-arborescence.
An \emph{in-ray} is an in-arborescences whose underlying undirected graph is a ray with the unique vertex of degree $1$ being its root.
An \emph{in-star} is an in-arborescence whose underlying undirected graph is a star with the centre being its root.
\emph{Out-arborescences}, \emph{out-rays}, \emph{out-stars} are defined analogously.
Given an in-ray or an out-ray $R$ rooted at $r$, we set, given $v, w \in V(R)$, $v \leq_R w$ if $v \in V(rRw)$.

\begin{prop}\label{prop:spanning_arborescences}
	Let $U$ be a countable set of vertices of a strongly connected directed graph~$D$.
	For every $r \in V(D)$ there exist an in-arborescence and an out-arborescence rooted at $r$ that contain $U$ and whose leaves are contained in $U$.
\end{prop}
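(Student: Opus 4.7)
The plan is to prove the statement for out-arborescences; the in-arborescence case then follows by the same argument applied to the directed graph obtained from $D$ by reversing every edge, since reversing orientations turns out-arborescences into in-arborescences and preserves strong connectivity. I will enumerate $U = \{u_0, u_1, u_2, \dots\}$ with $u_0 = u$ and build an ascending chain $T_0 \subseteq T_1 \subseteq \cdots$ of out-arborescences rooted at $u$ satisfying $\{u_0, \dots, u_n\} \subseteq V(T_n)$ at each stage, and then set $T := \bigcup_{n \in \NN} T_n$.

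Starting from $T_0 := (\{u\}, \emptyset)$, at step $n+1$ I set $T_{n+1} := T_n$ whenever $u_{n+1} \in V(T_n)$; otherwise I invoke strong connectivity to fix some directed $u$--$u_{n+1}$~path $P$ in $D$, let $v$ be the last vertex of $P$ that lies in $V(T_n)$ (such $v$ exists since $u \in V(T_n)$), and set $T_{n+1} := T_n \cup vPu_{n+1}$. By the choice of $v$, every internal vertex of $vPu_{n+1}$ together with its endvertex $u_{n+1}$ lies outside $V(T_n)$, so $T_{n+1}$ is still an out-arborescence rooted at $u$, now containing $\{u_0, \dots, u_{n+1}\}$. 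By construction $U \subseteq V(T)$.

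The remaining step is to check that every leaf of $T$ belongs to $U$. Here I use that any internal vertex $w$ of a suffix $vPu_{n+1}$ added at stage $n+1$ already acquires an outgoing edge in $T_{n+1}$ (namely the next edge of the suffix), and this outgoing edge is retained in every later $T_m$; hence $w$ has positive out-degree in $T$ and so is not a leaf of the out-arborescence. Consequently the only vertices of $T$ that can be leaves are the endvertices $u_k$ of the attached suffixes, all of which lie in $U$ by construction.

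I expect the one point that needs care is exactly this last observation: a careless greedy construction could leave non-$U$ internal vertices dangling as leaves. The key design choices — always extending toward a $U$-vertex, and attaching the suffix of $P$ starting at the \emph{last} vertex of $P$ in $T_n$ — together guarantee that every non-$U$ vertex added to $T$ has a forward neighbour in $T$. The in-arborescence case is obtained by applying the construction to the edge-reversed digraph, so the only subtlety is this single combinatorial observation about leaves.
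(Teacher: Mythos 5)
Your construction is essentially the same as the paper's: enumerate $U$, and at each step attach to the current arborescence a shortest path segment from the existing tree to the next target vertex (the paper phrases this as picking a directed $u_n$--$T_{n-1}$~path, whose internal vertices avoid $T_{n-1}$ by definition, which is exactly your ``suffix from the last vertex in $V(T_n)$'' trick). The only cosmetic differences are that the paper builds the in-arborescence first and obtains the out-arborescence by the symmetric argument, whereas you do the reverse, and that you spell out the verification that internal vertices of attached segments cannot remain leaves, which the paper leaves implicit.
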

\begin{proof}
	Let $r \in V(D)$ be arbitrary and let $\{u_n: n \in \NN\}$ be some enumeration of $U$.
	We construct recursively an increasing sequence $(T_n)_{n \in \NN}$ of in-arborescences rooted at $u$ such that $T_n$ contains $\{u_i: i \in [n]\}$ and has all its leaves in $U$.
	Set $T_0:= \{u\}$.
	If $T_{n -1 }$ has been defined for some $n \in \NN$, set $T_n:= T_{n - 1}$ if $u_n \in T_{n - 1}$.
	Otherwise pick some directed $u_n$--$T_{n-1}$~path $P_n$, which exists since $D$ is strongly connected, and set $T_n:= T_{n - 1} \cup P_n$.
	Then the in-arborescence $\bigcup_{n \in \NN} T_n$ is as desired.
	By reversing all orientations in the previous arguments we obtain the desired out-arborescences.
\end{proof}

We state the following strengthening of B\"urger and Melcher's result,~\cref{lem:buerger_melcher}:

\begin{prop}\label{prop:star_comb_arborescences}
	Let $U$ be some infinite set of vertices of an in-arborescence (out-arborescence) $T$ rooted at $r$.
	Then $T$ contains either an in-ray (out-ray) $R$ rooted at $r$ together with infinitely many (possibly trivial) disjoint directed $U$--$R$~paths ($R$--$U$~paths) or a subdivided in-star (out-star) with all leaves in $U$.
\end{prop}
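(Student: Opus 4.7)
The plan is to apply the star-comb lemma to the underlying undirected tree of $T$ with the set $U$ and to translate each of its two possible outcomes into the required directed substructure. I treat the case that $T$ is an in-arborescence; the out-arborescence case follows by reversing all orientations.

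In the star outcome, the undirected star-comb lemma yields a subdivided star with some centre $c$ and infinitely many internally disjoint branches ending in teeth in $U$. The key observation is that at most one branch can leave $c$ through the (unique) edge pointing to the parent of $c$ in $T$; any two such branches would share that edge, contradicting the internal disjointness of the star's branches. Discarding this exceptional branch if it exists still leaves infinitely many teeth, all of which are now descendants of $c$ in $T$. For each such tooth $u$, the branch from $c$ down to $u$ becomes, when traversed backwards, a directed $u$--$c$~path in the in-arborescence, and the union of these internally disjoint directed paths is a subdivided in-star with centre $c$ and all leaves in $U$.

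In the comb outcome, the star-comb lemma provides a ray $S=v_1 v_2 \dots$ in the underlying tree together with disjoint (possibly trivial) paths $P_i$ from each $v_i$ to a tooth $u_i\in U$, each meeting $S$ only in $v_i$. I first observe that $S$ must eventually descend forever in $T$: once $S$ takes one step from a vertex to one of its children, it cannot return to the parent (because a ray does not repeat vertices) and so must descend at every subsequent step, and since $v_1$ has only finitely many ancestors in $T$, such a turning point $v_k$ must exist. In the in-arborescence the descending tail becomes a directed path $\dots\to v_{k+2}\to v_{k+1}\to v_k$ ending at $v_k$; concatenating it with the unique directed $v_k$--$r$~path in $T$ yields an in-ray $R$ rooted at $r$. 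For each $i>k$, the path $P_i$ can leave $v_i$ through neither $v_{i-1}$ nor $v_{i+1}$ (internal disjointness with $S$), so it starts at a different child of $v_i$ and, by the same no-return argument, descends within that subtree to $u_i$. Hence $P_i$ is a directed $u_i$--$v_i$~path in $T$ (trivial if $u_i=v_i$), and these paths are pairwise disjoint and meet $R$ only in $v_i\in V(R)$, supplying the required infinitely many disjoint directed $U$--$R$~paths.

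The main technical point in both cases is the same \emph{no-return} phenomenon: the internal disjointness of the star branches at $c$, or of the comb teeth-paths at the spine, forbids any upward escape into $T$ and forces the directed substructures to align with the orientation of the in-arborescence. Once this is in hand the rest of the proof is a direct reinterpretation of the undirected star-comb output.
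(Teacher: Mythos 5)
Your proof is correct and follows the same approach as the paper's: apply the undirected star--comb lemma to the underlying tree of $T$ and translate each outcome into the oriented structure, using the ``no-return'' fact that in an in-arborescence a path that descends from a vertex into one of its subtrees can never escape upward again. The paper's write-up is terser than yours --- in the star case it does not explicitly mention discarding the at most one branch that escapes upward through the parent edge of the centre, and in the comb case it identifies the top vertex of the back implicitly via a ``WLOG re-rooting'' of the comb rather than your explicit location of the turning vertex $v_k$ --- but the underlying argument is the same, and your treatment of these details is careful and correct.
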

\noindent
B\"urger and Melcher's proof of~\cref{lem:buerger_melcher}~\cite{burger2020ends} also shows~\cref{prop:star_comb_arborescences}.
For the convenience of the reader, we present a proof nevertheless:
\begin{proof}
	We apply the star-comb lemma to the underlying undirected graph of $T$ and the set $U\setminus \{r\}$ to obtain either a subdivided star $S$ or a comb $C$ with all leaves in $U$.
	In the former case, the induced subgraph of $S$ contains an in-star with all leaves in $U$.
	In the latter case, there is a rooted in-ray $R$ in $T$ that has a tail in the back of the comb $C$.
	Then almost all teeth of $C$ induce the desired directed $U$--$R$~paths.
		By reversing all orientations, we obtain the desired out-ray/out-star.
\end{proof}

\subsection{Laced paths} \label{sec:basic}
In this subsection, we state properties about pairs of directed paths that intersect in a simple way.
We call two directed paths $P$ and $Q$ \emph{laced} if either $P$ and $Q$ are disjoint or there exist vertices $x_1, y_2, \dots, x_\ell, y_\ell \in V(P) \cap V(Q)$ and $\ell \in \NN$ such that
\begin{itemize}
	\item $x_1 \leq_P y_1 <_P \dots <_P x_{\ell} \leq_P y_{\ell}$,
	\item $x_\ell \leq_Q y_\ell <_Q \dots <_Q x_1 \leq_Q y_1$,
	\item $x_i P y_i = x_i Q y_i$ for every $i \in [\ell]$,
	\item the segments $y_\ell Q x_{\ell - 1}, \dots, y_2 Q x_{ 1}$ are internally disjoint to $P$, and
	\item $Q x_\ell$ intersects $P$ only in $x_\ell$ and $y_1 Q$ intersects $P$ only in $y_1$.
\end{itemize}
See \cref{fig:laced_paths} for an illustration.

\begin{figure}[ht]
	\begin{tikzpicture}
		
		\draw (3,1.6) node {$x_1$};
		\draw (4,0.4) node {$y_1$};
		\draw (5,0.4) node {$x_2$};
		\draw (6,1.6) node {$y_2$};
		\draw (7,1.6) node {$x_3$};
		\draw (8,0.4) node {$y_3$};
		
		\foreach \a in {1,...,9}{
			\draw[edge][PastelOrange] ({\a+0.1},1) to ({\a+0.9},1);
			\draw[LavenderMagenta] (\a,1) node {\huge .};
			\draw[LavenderMagenta] ({\a+1},1) node {\huge .};
		}
		\draw[edge, dashed][CornflowerBlue] (3.1,1) to (3.9,1);
		\draw[edge, dashed][CornflowerBlue] (5.1,1) to (5.9,1);
		\draw[edge, dashed][CornflowerBlue] (7.1,1) to (7.9,1);
		
		\draw[CornflowerBlue] (4.5,2) node {$Q$};
		
		\draw[PastelOrange] (2,2) node {$P$};
		
		\draw[edge][CornflowerBlue] (4,0.9) to[bend left] (2.1,0);
		\draw[edge][CornflowerBlue] (8,0.9) to[bend left] (5,0.9);
		\draw[edge][CornflowerBlue] (6,1.1) to[bend right] (3,1.1);
		\draw[edge][CornflowerBlue] (8.9,2) to[bend right] (7,1.1);
		
		\draw[LavenderMagenta] (2,0) node {\huge .};
		\draw[LavenderMagenta] (9,2) node {\huge .};
		
	\end{tikzpicture}
	\caption{Two laced directed paths.}
	\label{fig:laced_paths}
\end{figure}

\begin{prop}[\cite{hatzel2024generating}*{Lemma 4.3}]\label{prop:laced_paths}
	Let $D$ be a directed graph and let $P$ be some directed path in~$D$.
	Further, let $Q$ be a directed $a$--$b$~path in $D$ for some vertices $a, b \in V(D)$.
	Then there exists a directed $a$--$b$~path $Q'$ in $D$ with $Q' \subseteq P \cup Q$ such that $P$ and $Q'$ are laced.
\end{prop}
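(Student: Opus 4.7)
The approach is an extremality argument: choose $Q' \subseteq P \cup Q$ to be an $a$--$b$ path minimizing $\mu(Q') := |E(Q') \setminus E(P)|$. Such a $Q'$ exists because $Q$ itself is a candidate and $\mu$ takes values in $\NN$. I claim that any such $Q'$ is laced with $P$.

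If $V(P) \cap V(Q') = \emptyset$, then no edge of $P$ can occur in $Q'$, so $Q' \subseteq Q$ and hence $Q' = Q$; the pair is laced by the disjoint clause of the definition. Otherwise, enumerate $V(P) \cap V(Q') = \{v_1, \dots, v_k\}$ in $Q'$-order, and partition these into maximal \emph{blocks} under the transitive closure of the relation $v_i \sim v_{i+1}$ iff $v_i Q' v_{i+1} = v_i P v_{i+1}$. A brief check shows each block corresponds to a contiguous subpath of $P$: if $v_iQ'v_{i+1} = v_iPv_{i+1}$, then any internal vertex of this common segment would lie in $V(P)\cap V(Q')$ strictly between $v_i$ and $v_{i+1}$ in $Q'$-order, which is impossible by consecutivity in the enumeration, so $(v_i,v_{i+1}) \in E(P)$. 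Taking $x_i, y_i$ to be the first and last vertices of each block $B_i$ in $Q'$-order supplies the common segments $x_iPy_i = x_iQ'y_i$. The off-block segments $y_{i+1}Q'x_i$ together with $Q'x_\ell$ and $y_1Q'$ have no internal vertex in $V(P)\cap V(Q')$ (the block partition is an interval partition of the enumeration), and since their internal vertices lie on $Q'$, none lies in $V(P)\setminus V(Q')$ either; so they meet $P$ only at the intended endpoints.

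It remains to verify, using the minimality of $Q'$, that consecutive blocks on $Q'$ appear in strictly decreasing $P$-order: if block $B$ precedes block $B'$ on $Q'$, then $\max_P B' <_P \min_P B$. I argue by contradiction. A violation supplies $u, w \in V(P) \cap V(Q')$ — consecutive in the enumeration, namely $u = \max_{Q'} B$ and $w = \min_{Q'} B'$ — with $u <_{Q'} w$, $u <_P w$, and $u Q' w \ne u P w$ (the last inequality because $u, w$ lie in distinct blocks). Consider the $a$--$b$ walk $aQ'u \cdot uPw \cdot wQ'b$ in $P \cup Q$, and let $Q''$ be any $a$--$b$ path it contains. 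Since $P$ is itself a path, $uPw$ is the unique $u$--$w$ directed path in $P$, so $uQ'w \neq uPw$ forces $uQ'w$ to contain some edge outside $E(P)$. Hence
\[
\mu(Q'') \;\le\; |E(aQ'u) \setminus E(P)| + |E(wQ'b) \setminus E(P)| \;=\; \mu(Q') - |E(uQ'w) \setminus E(P)| \;<\; \mu(Q'),
\]
contradicting the choice of $Q'$.

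The main obstacle I anticipate is not any single hard step but rather the combinatorial bookkeeping: organizing the block partition so that the indices match the convention of the lacing definition (with $B_\ell$ first on $Q'$ and $B_1$ last), and verifying all five lacing conditions systematically. The graph-theoretic content is light, since the crucial shortcut step just uses the standard fact that every $a$--$b$ walk in a directed graph contains an $a$--$b$ path.
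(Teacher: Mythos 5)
Your proof is correct. The paper cites this proposition from the companion paper \cite{finite} and does not reprove it here, so there is no in-paper proof to compare against directly; but your extremality argument is sound and complete. In particular: minimizing $\mu(Q')=|E(Q')\setminus E(P)|$ over $a$--$b$ paths in $P\cup Q$ is well-posed since $Q$ is a candidate; the block decomposition is well-defined because within a block consecutive enumeration vertices are joined by an edge of $E(P)\cap E(Q')$, whence each block is a contiguous $P$-interval and distinct blocks are disjoint $P$-intervals; and the rerouting-through-$uPw$ contradiction correctly decreases $\mu$ because $uQ'w$ must contain an edge outside $E(P)$ (as $uPw$ is the unique $u$--$w$ path in $P$). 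One small remark for polish: you should state explicitly that the blocks are pairwise disjoint as $P$-intervals (it follows immediately from each block being a contiguous $P$-subpath whose vertex set is a maximal interval in $V(P)\cap V(Q')$), since this is what licenses reading the negation of ``$\max_P B'<_P\min_P B$'' as ``$\max_P B<_P\min_P B'$'' in the contradiction step.
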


\begin{obs}\label{obs:double_path}
	Let $D$ be a directed graph and let $a,b \in V(D)$.
	Further, let $P$ be a directed $a$--$b$~path and let $Q$ be a directed $b$--$a$~path such that $P$ and $Q$ are laced.
	Then the union $P \cup Q$ can be butterfly contracted to a double path starting in $a$ and ending in $b$.
\end{obs}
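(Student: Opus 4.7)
The plan is to perform an explicit sequence of butterfly contractions on $P \cup Q$ that collapses each maximal segment of the laced structure to a single edge. As a preliminary I will pin down what the laced data from the definition looks like under the hypothesis that $P$ runs from $a$ to $b$ and $Q$ from $b$ to $a$. Since $b$ lies in both $V(P)$ and $V(Q)$ and $Qx_\ell$ meets $P$ only in $x_\ell$, the startvertex $b$ of $Q$ must equal $x_\ell$; symmetrically $y_1 = a$. Combined with $x_1 \leq_P y_1 = a$ and with $a$ being the startvertex of $P$, this forces $x_1 = a$, and likewise $y_\ell = b$. So $x_1 = y_1 = a$ and $x_\ell = y_\ell = b$, and (assuming $a\neq b$) we get $\ell \geq 2$.

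Next I will partition the edges of $P \cup Q$ into three families of directed subpaths: the \emph{shared segments} $x_iPy_i = x_iQy_i$ for $2 \leq i \leq \ell-1$, each running from $x_i$ to $y_i$; the \emph{$P$-only segments} $y_iPx_{i+1}$ for $1 \leq i \leq \ell-1$, each running from $y_i$ to $x_{i+1}$; and the \emph{$Q$-only segments} $y_{i+1}Qx_i$ for $1 \leq i \leq \ell-1$, each running from $y_{i+1}$ to $x_i$. The key observation is that every internal vertex $u$ of any such subpath has both in-degree and out-degree exactly $1$ in $P \cup Q$: for a shared segment the equality $x_iPy_i = x_iQy_i$ forces $P$ and $Q$ to traverse the very same edges through $u$; for a $P$-only (respectively $Q$-only) segment the subpath is internally disjoint from $Q$ (respectively $P$). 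In particular the unique out-edge at every such $u$ is butterfly-contractible in $P \cup Q$.

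I will then iteratively contract all these butterfly-admissible edges. Each shared segment collapses to a single vertex $v_i$ (identifying $x_i$ with $y_i$); setting $v_1 := a$ and $v_\ell := b$, each $P$-only segment becomes a single edge $(v_i, v_{i+1})$ and each $Q$-only segment becomes a single edge $(v_{i+1}, v_i)$. The resulting butterfly minor of $P \cup Q$ is precisely $\mathcal{D}(P')$ for the undirected path $P' = v_1 v_2 \cdots v_\ell$ from $a$ to $b$, which is the desired double path. The only step that needs any care is the preliminary bookkeeping, where the endpoints $a, b$ have to be forced to coincide with appropriate laced indices; once this identification is in place, the remainder is a routine degree check.
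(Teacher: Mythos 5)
The paper states this as an Observation without giving a proof, so there is no written proof to compare against; your proposal fills in what the authors evidently have in mind — exploit the lacing structure to see that every internal vertex of the three families of subpaths has in- and out-degree exactly $1$, then contract. The endpoint identification ($x_1 = y_1 = a$, $x_\ell = y_\ell = b$), the three-way partition of $E(P\cup Q)$, and the degree count for internal vertices are all correct, and the target structure $\mathcal D(P')$ is indeed a double path as defined in the paper.

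There is one small slip in the contraction step. You propose to contract \emph{the unique out-edge at every internal vertex} and then assert that this already collapses each shared segment $x_iPy_i$ to a single vertex. It does not: for a shared segment $x_i \to u_1 \to \dots \to u_{k-1}\to y_i$, contracting the out-edges of $u_1,\dots,u_{k-1}$ merges $u_1,\dots,u_{k-1},y_i$, but leaves $x_i$ separate and leaves the first edge $(x_i,u_1)$ in place; in the extreme case of a shared segment consisting of a single edge, this contraction step does nothing at all, and the resulting directed graph has a lone single edge where the double path should not have one. The fix is immediate and follows from your own degree check: when $x_i <_P y_i$ the vertex $x_i$ also has out-degree $1$ in $P\cup Q$ (its only out-edge is the first shared edge, common to $P$ and $Q$), so $(x_i, u_1)$ is itself butterfly contractible; equivalently, it is the unique in-edge of $u_1$. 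Once this edge is contracted as well, the shared segment genuinely collapses to one vertex, and the rest of your argument yields the desired double path from $a$ to $b$.
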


\subsection{Knit rays}
In this subsection, we extend the concept of laced paths to pairs of in-rays and out-rays that have infinitely many vertices in common.
We call a pair of an out-ray $R$ and an in-ray $S$ with common root $r$ \emph{knit} if there exist vertices
$x_1, y_1, x_2, y_2, \dots \in V(R) \cap V(S)$ such that
\begin{itemize}
	\item $r <_R x_1 \leq_R y_1 <_R x_{2} \leq_R y_{2} <_R \dots$,
	\item $r <_S y_1 \leq_S x_1 < y_2 \leq_S x_2 <_S \dots$,
	\item $x_i R y_i = y_i S x_i$ for every $i \in \NN$,
	\item the segment $x_i S y_{i+1}$ is internally disjoint to $R$ for every $i \in \NN$, and
	\item the segment $rSy_1$ is internally disjoint to $R$.
\end{itemize}
See \cref{fig:knit_rays} for an illustration.

\begin{figure}[ht]
	\begin{tikzpicture}

		\foreach \a in {2,...,7}{
			\draw[edge][PastelOrange] ({\a+0.1},1) to ({\a+0.9},1);
			\draw[LavenderMagenta] (\a,1) node {\huge .};
			\draw[LavenderMagenta] ({\a+1},1) node {\huge .};
		}
		
		\draw[edge][PastelOrange, path fading=east] ({8.1},1) to ({9.9},1);
		
		\draw[edge, CornflowerBlue] (4,0.9) to[bend left] (2,0.9);
		\draw[edge, CornflowerBlue] (8,0.9) to[bend left] (5,0.9);
		\draw[edge, CornflowerBlue] (6,1.1) to[bend right] (3,1.1);
		\draw[edge, CornflowerBlue, path fading=east] (10,1.1) to[bend right] (7,1.1);
		
		\draw (2,0.5) node {$r$};
		\draw (3,1.5) node {$x_1$};
		\draw (4,0.5) node {$y_1$};
		\draw (5,0.5) node {$x_2$};
		\draw (6,1.5) node {$y_2$};
		\draw (7,1.5) node {$x_3$};
		\draw (8,0.5) node {$y_3$};
		
		\draw[PastelOrange] (2,2) node {$R$};
		
		\draw[CornflowerBlue] (5,2) node {$S$};
		
		\draw[edge, CornflowerBlue, dashed] (3.1,1) to (3.9,1);
		\draw[edge, CornflowerBlue, dashed] (5.1,1) to (5.9,1);
		\draw[edge, CornflowerBlue, dashed] (7.1,1) to (7.9,1);

	\end{tikzpicture}
	\caption{Two knit rays.}
	\label{fig:knit_rays}
\end{figure}

We remark that, given two knit rays $R$ and $S$, there exists a sequence of directed cycles $(C_i)_{i \in \NN}$ with the property that for every $j \neq k \in \NN$ $C_j\cap C_k$ is a directed path if $|j-k|=1$ and empty otherwise, and $\bigcup_{i \in \NN} C_i = R \cup S$.

\begin{prop}\label{prop:in_tree_or_knit_ray}
	Let $D$ be a strongly connected directed graph with an out-ray $R$.
	Then there exist $r \in V(R)$ and either
	\begin{itemize}
		\item an in-arborescence $T$ rooted at $r$ that intersects $V(R)$ exactly in $r$ and in infinitely many leaves of $T$, or
		\item an in-ray $S$ rooted at $r$ that is knit with $rR$.
	\end{itemize}
\end{prop}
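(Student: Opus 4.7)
Apply \cref{prop:spanning_arborescences} to choose an in-arborescence $T$ in $D$ rooted at $r_0$ (the root of $R$), containing $V(R)$, with leaves in $V(R)$. For each $r_i\in V(R)\setminus\{r_0\}$ let $\sigma(i)$ denote the initial segment of the directed $T$-path from $r_i$ to $r_0$, taken up to and including its next $R$-vertex $r_{\pi(i)}$; thus $\sigma(i)$ is a directed path from $r_i$ to $r_{\pi(i)}\neq r_i$ that is internally disjoint from $R$. I would then split into two cases according to whether some vertex of $V(R)$ catches infinitely many such returns.

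In the first case, some $r_j$ has $|\pi^{-1}(j)|=\infty$. Let $T^{\ast}:=\bigcup_{i\in\pi^{-1}(j)}\sigma(i)$. The constituent paths all terminate at $r_j$ and are internally disjoint from $R$, and (because $T$ is an in-arborescence, so each non-root vertex has out-degree one in $T$) each $r_i$ with $i\in\pi^{-1}(j)$ has in-degree $0$ in $T^{\ast}$. Hence $T^{\ast}$ is a sub-in-arborescence of $T$ rooted at $r_j$, and $T^{\ast}\cap V(R)=\{r_j\}\cup\{r_i:i\in\pi^{-1}(j)\}$, consisting of the root together with infinitely many leaves. Setting $r:=r_j$ supplies the in-arborescence alternative.

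In the second case, $|\pi^{-1}(j)|<\infty$ for every $r_j$. Consider the auxiliary in-arborescence $T'$ on vertex set $V(R)$ with edge set $\{(r_i,r_{\pi(i)}):i\ge 1\}$: under the finite-fiber hypothesis $T'$ is locally finite, and it is infinite since $V(R)$ is. By K\"onig's Infinity Lemma $T'$ contains an in-ray $R'=r_0,r_{n_1},r_{n_2},\ldots$ with $\pi(n_{k+1})=n_k$, and lifting via the $\sigma$'s yields an in-ray $S_0:=\bigcup_{k\ge 1}\sigma(n_k)$ in $D$ rooted at $r_0$, whose $R$-vertices are exactly $\{r_{n_k}\}_{k\ge 0}$. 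To extract a knit in-ray with some tail $rR$, I would iteratively select backward jumps in $D$ (directed $r_i$--$r_j$ paths internally disjoint from $R$ with $j<i$) satisfying the knit interleaving $j_0<j_1\le i_0<j_2\le i_1<j_3\le i_2<\cdots$, taking $x_k:=r_{j_k}$ and $y_k:=r_{i_{k-1}}$ so that the $k$-th shared segment is the $R$-subpath from $r_{j_k}$ to $r_{i_{k-1}}$. Strong connectivity of $D$ furnishes, for every threshold index $j$, a backward jump $(i,j')$ with $j'\le j<i$; combined with the finite-fiber hypothesis, this organises the partial knit extensions into a locally finite tree on which K\"onig can be re-applied to produce the required infinite sequence.

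The main obstacle is the knit construction in the second case: the raw K\"onig ray $R'$ need not visit $V(R)$ in increasing $R$-order, so the lift $S_0$ is not knit with $R$ directly. Merely passing to a monotone subsequence via Erd\H{o}s--Szekeres fails because the $S_0$-segments between non-adjacent chosen indices contain intermediate $R$-vertices and therefore violate the internal-disjointness requirement on the extra portions of $S$. Reconciling the ordering, the internal disjointness of each extra portion from $R$, and the pairwise internal disjointness of distinct extra portions is the delicate step; it is handled by using strong connectivity of $D$ to reroute through non-$R$ vertices while simultaneously fitting each selected extra portion into the right $R$-interval dictated by the previous choice.
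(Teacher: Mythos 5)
Your Case 1 argument is sound. But Case 2 has a genuine gap, one you yourself flag: after K\H{o}nig yields an in-ray $R'$ in the auxiliary tree $T'$, nothing constrains the $\leq_R$-order in which the lifted in-ray $S_0$ meets $V(R)$, so $S_0$ need not be knit with any tail $rR$; and the recovery in your last paragraph (``it is handled by using strong connectivity \dots while simultaneously fitting each selected extra portion into the right $R$-interval dictated by the previous choice'') is not an argument. It does not specify which backward jumps to pick, why the chosen jump must land in the interval prescribed by the previous one, or why the ``tree of partial knit extensions'' is locally finite so that K\H{o}nig can be re-applied. There is also a structural mismatch: your hypothesis $|\pi^{-1}(j)|<\infty$ only bounds returns inside the fixed in-arborescence $T$, not the backward jumps available in $D$ at large; $D$ might still have some $r_j$ reached from infinitely many distinct vertices of $R$ by paths missing $T$, and that freedom is exactly what sabotages any attempt to force jumps into prescribed intervals.

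The paper builds the interval control into the definitions rather than retrofitting it onto a K\H{o}nig ray. When not in the in-arborescence case, every $v\in V(R)$ has a $\leq_R$-maximal vertex $\alpha(v)$ from which a directed $R$--$v$~path starts; set $a_1$ to be the root of $R$ and let $a_{n+1}$ be the $\leq_R$-maximum of $\{\alpha(v):v\leq_R a_n\}$. Strong connectivity makes this strictly $\leq_R$-increasing, and maximality forbids any $R$--$R$~path starting beyond $a_n$ from ending at or before $a_{n-1}$, so a jump from $a_{n+1}$ must land strictly after $a_{n-1}$ and at or before $a_n$ --- exactly the interleaving knitting requires. These jumps then concatenate along $R$ into the desired in-ray. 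Your K\H{o}nig-based plan is a genuinely different route, but without a substitute for this maximality property --- some invariant guaranteeing both progress along $R$ and containment in the correct interval --- it does not go through, and supplying such an invariant would in effect reproduce the $\alpha$-based construction.
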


\begin{proof}
		If there exists a vertex $r \in V(R)$ such that there are infinitely many directed $R$--$r$~paths that start in distinct vertices of $R$, then there exists an in-arborescence $T$ rooted at $r$ that intersects $V(R)$ exactly in $r$ and in infinitely many leaves of $T$ by~\cref{prop:spanning_arborescences}.
	
	Thus we can assume that for every $v \in V(R)$ there exists a $\leq_R$-maximal vertex $\alpha(v)$ in which a directed $R$--$v$~path starts.
	Let $r$ be the root of $R$.
	We define a $\leq_R$-increasing sequence $(y_n)_{n \in \NN_0}$ of vertices of $R$ with $y_0:= r$ and such that $y_{n+1}$ is $\leq_R$-maximal along $\{\alpha(v): v \leq_R y_n \}$ for every $n \in \NN_0$.
	Since $D$ is strongly connected, there exists some directed $R$--$R$~path starting in $\{v \in V(R): v >_R y_n \}$ and ending in $\{v \in V(R): v \leq_R y_n \}$, which implies that $(y_n)_{n \in \NN_0}$ is strictly $\leq_R$-increasing.
	
	Let $Q_n$ be some directed $R$--$R$~path starting in $y_{n}$ and ending in $\{v \in V(R): v \leq_R y_{n-1} \}$ for every $n \in \NN$, which exists by the choice of $y_{n}$.
	The endpoint $x_{n-1}$ of $Q_n$ holds $y_{n-2} < x_{n-1} \leq y_{n-1}$, by the choice of $y_{n-1}$.
	Furthermore, note that the directed paths $(Q_n)_{n \in \NN}$ are internally vertex-disjoint by the choice of $(y_n)_{n \in \NN_0}$.
	Thus the concatenation $r Q_1 y_1 R x_1 Q_2 y_2 R x_2 Q_3 y_3 \dots$ is an in-ray that is knit with $rR$, as desired.
\end{proof}

\subsection{Butterfly minors}
Before we introduce butterfly minors for infinite directed graphs, we recall the definition of butterfly minors for finite directed graphs:
An edge $e$ is \emph{butterfly contractable} if either $e$ is the only outgoing edge of its tail or $e$ is the only ingoing edge of its head.
A finite directed graph $H$ is a \emph{butterfly minor} of a finite directed graph $D$ if $H$ can be obtained from a subgraph of $D$ by repeatedly contracting butterfly contractable edges.
Amiri, Kawarabayashi, Kreutzer and Wollan presented \cite{amiri2016erdos}*{Lemma 3.2} an equivalent definition of butterfly minors for finite directed graphs via tree-like models.

We transfer their definition verbatim to infinite directed graphs:
Given directed graphs $D$ and $H$, a map $\mu $ assigning every $e\in E(H)$ an edge of $D$ and every $v \in V(H)$ a subgraph of $D$, such that
\begin{itemize}
	\item $\mu (v) \cap \mu (w) = \emptyset$ for every $v \neq w \in V(H)$,
	\item for every $v \in V(H)$ the directed graph $\mu(v)$ is the union of an in-arborescence $T_{\mathrm{in}}^{\mu(v)}$ and an out-arborescence $T_{\mathrm{out}}^{\mu(v)}$ which only have their roots in common, and
	\item for every $e=(u,v) \in E(H)$ the edge $\mu(e)$ has its tail in $T_{\mathrm{out}}^{\mu(u)}$ and its head in $T_{\mathrm{in}}^{\mu(v)}$
\end{itemize}
is called a \emph{tree-like model of $H$ in $D$}.
A directed graph $H$ is a \emph{butterfly minor} of a directed graph~$D$ if there exists a tree-like model of $H$ in $D$.

Alternatively, one could define butterfly minors of infinite directed graphs via sequences of butterfly contractions with a suitable definition for limit steps.
In the next paragraph, we show that this generalises our notion of butterfly minors in the following way:
$\mu(v)$ can also be a directed graphs whose vertices have out-degree precisely one and no edge $\mu(e)$ has tail in $\mu(v)$, and, similarly, $\mu(v)$ can be a directed graph whose vertices have in-degree precisely one and no edge $\mu(e)$ has head in $\mu(v)$.
However, in these cases, there is in general no vertex in $\mu(v)$ that can reach all edges leaving $\mu(v)$ and can be reach from edges entering $\mu(v)$:
For example, $\mu(v)$ can be an out-ray $R$ together with infinitely many edges that enter $\mu(v)$ in distinct vertices of $R$.
As the existence of a vertex in $\mu(v)$ representing the connectivity of $v$ (see \cref{propconnectedness}) is fundamental in the context of finite directed graphs, we define butterfly minors in the more restrictive way.

Now, we show that sequences of butterfly contractions generalise our notion of butterfly minors in the described way:
Let $C$ be a weakly connected component of edges that will be contracted by such a sequence.
We assume that for every $e \in E(C)$, there are edges leaving or entering both weak components of $C - e$, since otherwise $e$ can be deleted instead.
Note that the underlying undirected graph of $C$ is a tree.
As in the finite case,
for every subgraph $P$ of $C$ whose underlying undirected graph is a path with endvertices $u$ and $v$ holds:
\begin{itemize}
	\item if there is an edge entering $C$ in $u$ and an edge leaving $C$ in $v$, then $P$ is a directed $u$--$v$~path,
	\item if there are edges entering $C$ in $u$ and $v$, then every internal vertex of $P$ has out-degree at most $1$ in $P$, and
	\item if there are edges leaving $C$ in $u$ and $v$, then every internal vertex of $P$ has in-degree at most $1$ in $P$.
\end{itemize}
This implies that, if there exist edges leaving and entering $C$, then $C$ is a union of an in-arborescence and an out-arborescence as in the tree-like model.
If there are no edges leaving $C$, then all vertices of $C$ have out-degree at most $1$ in $C$.
Similarly, if there are no edges entering $C$, then all vertices of $C$ have in-degree at most $1$ in $C$.
Thus $C$ has the desired shape, and therefore the sequence of butterfly contractions can be displayed in the described way.
Conversely, it is natural that every such directed graph can be contracted to a single vertex by some sequence of butterfly contractions.

We turn our attention to properties of butterfly minors.
We remark that a directed graph obtained from a subgraph of directed graph $D$ by a finite sequence of butterfly contractions is a butterfly minor of $D$.

\begin{obs}
	Let $D_1$, $D_2$, $D_3$ be directed graphs such that $D_1$ is a butterfly minor of $D_2$ and $D_2$ is a butterfly minor of $D_3$. Then $D_1$ is a butterfly-minor of $D_3$.
\end{obs}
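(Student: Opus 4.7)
My plan is to prove transitivity by composing the two given tree-like models. Let $\mu_1$ be a tree-like model of $D_1$ in $D_2$ and $\mu_2$ a tree-like model of $D_2$ in $D_3$. I construct a tree-like model $\mu$ of $D_1$ in $D_3$ by setting $\mu(e) := \mu_2(\mu_1(e))$ for every edge $e \in E(D_1)$, and by substituting, for every $v \in V(D_1)$, the branch set $\mu_2(w)$ into each $w \in V(\mu_1(v))$ and the edge $\mu_2(e')$ into each $e' \in E(\mu_1(v))$.

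Concretely, writing $r_v$ for the root of $\mu_1(v)$ and $r_x$ for the root of $\mu_2(x)$, I take $r := r_{r_v}$ as the root of $\mu(v)$. The out-arborescence $T_{\mathrm{out}}^{\mu(v)}$ is built recursively along $T_{\mathrm{out}}^{\mu_1(v)}$ starting from $T_{\mathrm{out}}^{\mu_2(r_v)}$: processing the edges $e' = (u,w)$ of $T_{\mathrm{out}}^{\mu_1(v)}$ in order of distance of $u$ from $r_v$, I adjoin $\mu_2(e')$, then the unique directed path in $T_{\mathrm{in}}^{\mu_2(w)}$ from the head of $\mu_2(e')$ to $r_w$, and finally the out-arborescence $T_{\mathrm{out}}^{\mu_2(w)}$. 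Although the connector path runs inside the \emph{in}-arborescence $T_{\mathrm{in}}^{\mu_2(w)}$, its edges are directed toward $r_w$, which is the direction \emph{away} from the global root $r$ because we enter $\mu_2(w)$ at the head of $\mu_2(e')$ and leave through $r_w$. The in-arborescence $T_{\mathrm{in}}^{\mu(v)}$ is built symmetrically by swapping the roles of $T_{\mathrm{in}}^{\mu_1(v)}$ with $T_{\mathrm{out}}^{\mu_1(v)}$ and of $T_{\mathrm{in}}^{\mu_2(\cdot)}$ with $T_{\mathrm{out}}^{\mu_2(\cdot)}$.

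I then verify the three conditions of a tree-like model. Disjointness of the $\mu(v)$ for distinct $v \in V(D_1)$ follows from disjointness of the $\mu_1(v)$ combined with disjointness of the $\mu_2(w)$, since $\mu(v)$ uses only branch sets $\mu_2(w)$ with $w \in V(\mu_1(v))$. The edge condition is immediate: for $e = (u,v) \in E(D_1)$, the tail $w$ of $\mu_1(e)$ lies in $V(T_{\mathrm{out}}^{\mu_1(u)})$, hence the tail of $\mu_2(\mu_1(e))$ lies in $T_{\mathrm{out}}^{\mu_2(w)} \subseteq T_{\mathrm{out}}^{\mu(u)}$, and symmetrically for the head.

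The main bookkeeping is the arborescence condition. I check that every non-root vertex of $T_{\mathrm{out}}^{\mu(v)}$ has in-degree exactly one inside $T_{\mathrm{out}}^{\mu(v)}$ by distinguishing whether the vertex lies in some $T_{\mathrm{out}}^{\mu_2(w)}$, strictly on a connector path, at the head of some $\mu_2(e')$, or equals $r_w$ for a non-root $w$; combined with the reachability from $r$ established in the construction, this forces $T_{\mathrm{out}}^{\mu(v)}$ to be an out-arborescence, and the symmetric argument handles $T_{\mathrm{in}}^{\mu(v)}$. Finally, $T_{\mathrm{in}}^{\mu(v)}$ and $T_{\mathrm{out}}^{\mu(v)}$ share only $r$: every $w \in V(\mu_1(v)) \setminus \{r_v\}$ lies in exactly one of $T_{\mathrm{in}}^{\mu_1(v)}, T_{\mathrm{out}}^{\mu_1(v)}$, so $\mu_2(w)$ is touched by at most one of the two arborescences of $\mu(v)$, and inside $\mu_2(r_v)$ the two restrict to $T_{\mathrm{in}}^{\mu_2(r_v)}$ and $T_{\mathrm{out}}^{\mu_2(r_v)}$, which meet only at $r$. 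The only real obstacle is keeping this case analysis straight; no new graph-theoretic idea is required.
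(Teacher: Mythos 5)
Your composed-model construction is correct, and it is the natural way to prove this observation, which the paper states without proof. In particular, the key point you identify -- that the unique path in $T_{\mathrm{in}}^{\mu_2(w)}$ from the head of $\mu_2(e')$ to $r_w$ is directed \emph{away} from the global root, so that splicing it between $\mu_2(e')$ and $T_{\mathrm{out}}^{\mu_2(w)}$ preserves the out-arborescence structure (every non-root vertex has in-degree exactly one and is reachable from $r$) -- is exactly the step that makes the composition work, and your disjointness and edge-condition checks are sound.
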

Let $\Dv$ be the directed graph obtained from a directed graph $D$ by reversing the orientation of every edge in $D$.
\begin{obs}\label{obs:reversing}
	If $H$ is a butterfly minor of $D$, then $\Hv$ is a butterfly minor of $\Dv$.
\end{obs}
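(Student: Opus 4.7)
The plan is to show that reversal of directed graphs is functorial with respect to tree-like models: a tree-like model of $H$ in $D$ gives rise, by reversing every edge, to a tree-like model of $\Hv$ in $\Dv$. The conceptual content is simply that reversing an in-arborescence produces an out-arborescence with the same underlying tree and the same root, and vice versa.

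More concretely, I would fix a tree-like model $\mu$ of $H$ in $D$ and define a candidate map $\mu^{v}$ for $\Hv$ in $\Dv$ as follows. For every vertex $v \in V(\Hv) = V(H)$, let $\mu^{v}(v)$ be the subgraph of $\Dv$ obtained from $\mu(v)$ by reversing every edge. For every edge $e^{v}=(v,u) \in E(\Hv)$ arising from $e=(u,v)\in E(H)$, let $\mu^{v}(e^{v})$ be the edge of $\Dv$ obtained by reversing $\mu(e)$.

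Then I would verify the three defining conditions of a tree-like model. Disjointness $\mu^{v}(v) \cap \mu^{v}(w)=\emptyset$ for $v\neq w$ is immediate because reversal acts on edges individually and $\mu(v) \cap \mu(w)=\emptyset$. For the arborescence condition, set $T_{\mathrm{out}}^{\mu^{v}(v)}$ to be the reverse of $T_{\mathrm{in}}^{\mu(v)}$ and $T_{\mathrm{in}}^{\mu^{v}(v)}$ to be the reverse of $T_{\mathrm{out}}^{\mu(v)}$; these are an out-arborescence and an in-arborescence, respectively, which share only the common root of $T_{\mathrm{in}}^{\mu(v)}$ and $T_{\mathrm{out}}^{\mu(v)}$, and their union is $\mu^{v}(v)$. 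For the incidence condition, observe that if $\mu(e)$ has its tail in $T_{\mathrm{out}}^{\mu(u)}$ and its head in $T_{\mathrm{in}}^{\mu(v)}$, then after reversal its new tail lies in the reverse of $T_{\mathrm{in}}^{\mu(v)}$, which is $T_{\mathrm{out}}^{\mu^{v}(v)}$, and its new head lies in the reverse of $T_{\mathrm{out}}^{\mu(u)}$, which is $T_{\mathrm{in}}^{\mu^{v}(u)}$; this is exactly the requirement for the edge $e^{v}=(v,u) \in E(\Hv)$.

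There is no real obstacle here; the proof is pure bookkeeping. The only point that requires care is tracking that reversal simultaneously swaps in-arborescences with out-arborescences and swaps heads of edges with tails, so that the two swaps match up with the direction change $e=(u,v) \mapsto e^{v}=(v,u)$ in $\Hv$.
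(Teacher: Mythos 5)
Correct. The paper states this as an unproved Observation, treating it as routine, and your argument is precisely the routine verification: reverse every edge in the tree-like model, note that reversal swaps in- and out-arborescences while preserving their common root, and check that the head/tail swap on the edges $\mu(e)$ matches the direction reversal $e=(u,v)\mapsto(v,u)$ in $\Hv$.
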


An edge $e=(u,v)\in E(D)$ of a directed graph $D$ is called \emph{butterfly contractible} if $e$ is the only outgoing edge of $u$ or the only incoming edge of $v$.
\begin{obs}
	Let $H$ and $D$ be directed graphs. If $H$ is obtained from a subgraph of $D$ by a finite sequence of contractions of butterfly contractible edges, then $H$ is a butterfly minor of $D$.
\end{obs}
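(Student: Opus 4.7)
The proof naturally splits into three pieces, which can then be glued together using the already-established composition observation ``if $D_1$ is a butterfly minor of $D_2$ and $D_2$ is a butterfly minor of $D_3$, then $D_1$ is a butterfly minor of $D_3$''.

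First I would observe that every subgraph $D'$ of $D$ is itself a butterfly minor of $D$: define $\mu(v) := \{v\}$ for each $v \in V(D')$ (with $T_{\mathrm{in}}^{\mu(v)} = T_{\mathrm{out}}^{\mu(v)} = \{v\}$) and $\mu(e) := e$ for each $e \in E(D')$. All three conditions in the definition of a tree-like model hold trivially. By the composition observation it therefore suffices to prove the statement in the special case where $H$ is obtained from $D$ itself by a finite sequence of butterfly contractions.

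I would then proceed by induction on the number $n$ of contractions. The case $n=0$ is just the previous paragraph. For the inductive step, write $H = H'/e$ where $e$ is butterfly contractible in $H'$ and $H'$ is obtained from $D$ by $n-1$ butterfly contractions; by induction $H'$ is a butterfly minor of $D$, so by the composition observation it remains to show that the single contraction $H = H'/e$ yields a butterfly minor of $H'$. Let $e=(u,v)$ and let $w$ denote the vertex of $H$ arising from the contraction of $e$. I define a tree-like model $\mu$ of $H$ in $H'$ by setting $\mu(x) := \{x\}$ for every $x \in V(H) \setminus \{w\}$, $\mu(w) := \{u,v,e\}$, and $\mu(f) := f$ for every $f \in E(H)$ (each $f$ being naturally identified with an edge of $H'$ other than $e$).

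The only nontrivial point is to decompose $\mu(w) = \{u,v,e\}$ as $T_{\mathrm{in}}^{\mu(w)} \cup T_{\mathrm{out}}^{\mu(w)}$ with the arborescences sharing only their common root, in such a way that each edge of $H$ incident with $w$ has the required tail/head placement. Here I split into the two cases from the definition of butterfly contractibility. If $e$ is the only outgoing edge of $u$ in $H'$, then I set $T_{\mathrm{in}}^{\mu(w)} := \{u,v,e\}$ (an in-arborescence rooted at $v$ with leaf $u$) and $T_{\mathrm{out}}^{\mu(w)} := \{v\}$; any edge $(w,y) \in E(H)$ must come from an edge $(v,y) \in E(H')$ because $e$ is the only outgoing edge of $u$, hence its tail lies in $T_{\mathrm{out}}^{\mu(w)}$, while any edge $(y,w) \in E(H)$ comes from some edge of $H'$ with head in $\{u,v\} = V(T_{\mathrm{in}}^{\mu(w)})$. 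If instead $e$ is the only incoming edge of $v$ in $H'$, I set $T_{\mathrm{out}}^{\mu(w)} := \{u,v,e\}$ (an out-arborescence rooted at $u$) and $T_{\mathrm{in}}^{\mu(w)} := \{u\}$, and the symmetric verification applies.

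The main (and only real) obstacle is this last bookkeeping: one must use butterfly contractibility in exactly the right place to guarantee that every edge incident with the contracted vertex $w$ in $H$ has its endpoint in the correct arborescence of $\mu(w)$. Once the two cases above are written out, all conditions in the definition of a tree-like model are immediate, completing the induction and hence the proof.
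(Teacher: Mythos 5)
Your argument is correct: the paper states this as an observation without proof, and your three-step verification (subgraphs are butterfly minors via singleton branch sets, transitivity, and the explicit tree-like model $\mu(w)=\{u,v,e\}$ split into $T_{\mathrm{in}}^{\mu(w)}$ and $T_{\mathrm{out}}^{\mu(w)}$ according to which half of the butterfly condition holds) is exactly the routine argument the paper implicitly relies on. Nothing is missing; the case distinction on whether $e$ is the unique outgoing edge of $u$ or the unique incoming edge of $v$ is placed precisely where it is needed.
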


The vertex set of a butterfly minor is in general not contained in the vertex set of its host graph.
Therefore in the statement of~\cref{main_theorem} it is not immediately clear how a butterfly minor can have teeth in $U$.
To overcome this problem, we identify, given a tree-like model $\mu$ of $H$ in $D$ and arborescences $(T_{\mathrm{in}}^{\mu(v)})_{v \in V(H)}, (T_{\mathrm{out}}^{\mu(v)})_{v \in V(H)}$, each vertex $u \in V(H)$ with the common root of $T_{\mathrm{in}}^{\mu(u)}$ and $T_{\mathrm{out}}^{\mu(u)}$.
Thus some vertex $u \in V(H)$ is contained in a set $U \subseteq V(D)$ if the common root of $T_{\mathrm{in}}^{\mu(u)}$ and $T_{\mathrm{out}}^{\mu(u)}$ is contained in $U$.

In this paper we will not state tree-like models explicitly.
Instead, we will indicate either the non-trivial in-arborescences and out-arborescences or the butterfly contractible edge that we pick for the construction of a certain butterfly minor.
In the latter case, there might be two choice for a tree-like model, but we will implicitly choose the tree-like model in a way that the vertices of the given set $U$ are preserved.

We show that taking butterfly minors does not create new directed paths:
\begin{prop}\label{propconnectedness}
	Let $H$ be a butterfly minor of a directed graph $D$ and let $v, w \in V(H)$. If there exists a directed $v$--$w$~path in $H$, then there exists a directed $v$--$w$~path in $D$.
\end{prop}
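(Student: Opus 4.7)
Let $\mu$ be a tree-like model of $H$ in $D$ witnessing $H$ as a butterfly minor, with associated arborescences $T_{\mathrm{in}}^{\mu(u)}$ and $T_{\mathrm{out}}^{\mu(u)}$ for each $u \in V(H)$, and recall that we identify each $u \in V(H)$ with the common root of these two arborescences. Given a directed $v$--$w$~path $P = u_0 e_1 u_1 e_2 \dots e_k u_k$ in $H$ with $u_0 = v$ and $u_k = w$, the plan is to lift $P$ to a directed $v$--$w$~walk in $D$ segment by segment and then extract a path. If $k = 0$ there is nothing to do, so assume $k \geq 1$.

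For each $i \in [k]$ the edge $\mu(e_i) \in E(D)$ has tail $t_i \in V(T_{\mathrm{out}}^{\mu(u_{i-1})})$ and head $h_i \in V(T_{\mathrm{in}}^{\mu(u_i)})$ by the defining properties of a tree-like model. Inside the arborescence $T_{\mathrm{out}}^{\mu(u_{i-1})}$, which is rooted at $u_{i-1}$ and has all edges pointing away from the root, there is a unique directed $u_{i-1}$--$t_i$~path $O_i$. Dually, inside $T_{\mathrm{in}}^{\mu(u_i)}$, which is rooted at $u_i$ with all edges pointing towards the root, there is a unique directed $h_i$--$u_i$~path $I_i$. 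The concatenation
\[
W \;:=\; O_1 \, \mu(e_1) \, I_1 \, O_2 \, \mu(e_2) \, I_2 \, \dots \, O_k \, \mu(e_k) \, I_k
\]
is a directed walk in $D$ starting at $u_0 = v$ and ending at $u_k = w$, since each $I_i$ ends at $u_i$ and each $O_{i+1}$ starts at $u_i$, and the head of $\mu(e_i)$ is the start of $I_i$ while the tail of $\mu(e_i)$ is the end of $O_i$. Any directed walk from $v$ to $w$ in $D$ contains a directed $v$--$w$~path (shortcut at any repeated vertex), so this yields the required path.

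The only thing to verify carefully is that each $O_i$ and $I_i$ actually exists and is directed the right way, but this follows immediately from the arborescence conditions imposed on the branch sets $\mu(u)$: directed paths from the root to any vertex exist in $T_{\mathrm{out}}^{\mu(u)}$, and directed paths from any vertex to the root exist in $T_{\mathrm{in}}^{\mu(u)}$. Disjointness of branch sets for distinct vertices of $H$ is not needed for the existence of the walk, only for keeping the different pieces $O_i, I_i$ from accidentally colliding in ways that would affect the argument; since we only need a walk and then extract a path, no such disjointness is required here. The main (and only) obstacle is purely notational bookkeeping of the segments, which the above concatenation handles directly.
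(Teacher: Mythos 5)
Your argument is correct: lifting each edge of the $H$-path through the out-arborescence of its tail's branch set and the in-arborescence of its head's branch set yields a directed $v$--$w$~walk in $D$, from which a path is extracted; this is the standard and essentially only natural argument, and it matches what the cited source does (the present paper gives no proof, importing the statement from the first paper of the series). No gaps; the existence of the segments $O_i$, $I_i$ is exactly what the arborescence conditions guarantee, and disjointness of branch sets is indeed not needed once one passes through a walk.
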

\begin{proof}
	Let $\mu$ be a tree-like model of $H$ in $D$, and let $(T_{\mathrm{in}}^{\mu(v)})_{v \in V(H)}$ and $(T_{\mathrm{out}}^{\mu(v)})_{v \in V(H)}$ be the arborescences of $\mu$.
	Let $P$ be a directed $v$--$w$~path in $H$. For every edge $e =(x,y)$ in $P$ there exists a directed path in $T_{\mathrm{out}}^{\mu(x)}$ that starts in the root of $T_{\mathrm{out}}^{\mu(x)}$ and ends in the tail of $\mu(e)$. Similarly, there exists a directed path in $T_{\mathrm{in}}^{\mu(y)}$ that starts in the head of $\mu(e)$ and ends in the root of $T_{\mathrm{in}}^{\mu(y)}$.
	By concatenation of all these paths and the edge $\mu(e)$ for every $e \in E(P)$ we obtain the desired directed $v$--$w$~path in $D$.
\end{proof}

Finally, we state the unavoidable strongly connected butterfly minors that preserve three given vertices.

\begin{prop}[\cite{finite}*{Proposition 5.2}]\label{prop3star}
	Let $D$ be a strongly connected directed graph and let $u_1, u_2, u_3$ be distinct vertices of $D$.
	Then there exists a butterfly minor of $D$ that is
	\begin{itemize}
		\item a double path containing $u_1, u_2, u_3$,
		\item the union of non-trivial double paths $F_1, F_2, F_3$ that intersect only in a common endpoint and whose other endpoints are $u_1, u_2, u_3$, or
		\item the union of a directed cycle $C$ of length $3$ and disjoint (possibly trivial) double paths $F_1, F_2, F_3$ having an endpoint in $V(C)$ and whose other endpoints are $u_1, u_2, u_3$.
	\end{itemize}
\end{prop}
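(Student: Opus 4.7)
My strategy is to extract a double path between $u_1$ and $u_2$ as a butterfly minor and then analyse how $u_3$ attaches to it.

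Since $D$ is strongly connected, there exist a directed $u_1$--$u_2$ path $P$ and a directed $u_2$--$u_1$ path $P'$ in $D$. By \cref{prop:laced_paths} I may assume $P$ and $P'$ are laced, and then \cref{obs:double_path} turns $P\cup P'$ into a double path $F_{12}$ from $u_1$ to $u_2$ as a butterfly minor of $D$. If $u_3$ already lies in the underlying subgraph of $F_{12}$, this minor contains a double path through all of $u_1,u_2,u_3$, giving the first bullet. Otherwise I pick a directed $u_3$--$V(F_{12})$ path $A$ with endpoint $v_A$ and a directed $V(F_{12})$--$u_3$ path $B$ with startvertex $v_B$, and proceed by cases.

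\emph{Case (a): $A$ and $B$ can be chosen with $v_A=v_B=:v$.} Applying \cref{prop:laced_paths,obs:double_path} to this $u_3$--$v$ path and $v$--$u_3$ path produces a double path $F_3$ between $u_3$ and $v$. If $v\in\{u_1,u_2\}$, the union of $F_{12}$ with $F_3$ is a double path through all three distinguished vertices (first bullet); otherwise $v$ is internal to $F_{12}$, and the two sub-double-paths of $F_{12}$ on either side of $v$, together with $F_3$, give the tripod at centre $v$ of the second bullet. \emph{Case (b): $v_A\neq v_B$ is forced.} Inside $F_{12}$ there is a directed $v_A$--$v_B$ path $Z$ using one orientation of the relevant double edges, and $A\cup Z\cup B$ is a directed cycle $C'$ through $u_3$, $v_A$ and $v_B$. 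The two remaining double-path segments of $F_{12}$ attach $u_1$ and $u_2$ to $C'$ at $v_A$ or $v_B$, with no further interaction. In the resulting subgraph of $D$, every internal vertex of $A$, $B$ and $Z$ has in- and out-degree one, so each edge on $C'$ is butterfly contractible; contracting the three arcs shrinks $C'$ to a directed $3$-cycle equipped with (possibly trivial) double paths whose remaining endpoints are $u_1,u_2,u_3$, as required by the third bullet.

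The main obstacle is to choose $A$ and $B$ so that they are internally disjoint from one another, because otherwise the arcs of $C'$ in Case (b) share vertices and butterfly contractibility of those arcs fails. My plan is to take $A$ to be a shortest directed $u_3$--$V(F_{12})$ path and $B$ to be a shortest directed $V(F_{12})$--$u_3$ path in $D$ avoiding the internal vertices of $A$; should no such $B$ exist, every $V(F_{12})$--$u_3$ path meets $A$ internally at some vertex $w$, and applying \cref{prop:laced_paths,obs:double_path} to the $u_3$--$w$-prefix of $A$ and the $w$--$u_3$-suffix of such a path produces a double path between $u_3$ and $w$, which then pushes the argument back into Case~(a) after composing with a $w$--$V(F_{12})$ path.
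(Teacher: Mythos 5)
There is a genuine gap, and it stems from conflating the butterfly minor $F_{12}$ with its underlying subgraph $P\cup Q'$ of $D$. After the contraction of \cref{obs:double_path}, a vertex of $P\cup Q'$ appears as a vertex of the minor only if it can serve as the root of its branch set, and a vertex in the interior of a segment traversed by only one of the two laced paths can never be such a root. Consequently your very first reduction -- ``if $u_3$ lies in the underlying subgraph of $F_{12}$, this minor contains a double path through all of $u_1,u_2,u_3$'' -- is false. Take $D$ to be the directed $4$-cycle with vertices $u_1,u_3,u_2,w$ and edges $(u_1,u_3),(u_3,u_2),(u_2,w),(w,u_1)$: here $P=u_1u_3u_2$, $Q'=u_2wu_1$, and $u_3\in V(P\cup Q')$, yet a double path on three branch sets needs four minor edges, which would force all four edges of $D$ to be minor edges and all branch sets to be single vertices, and $D$ has no such subgraph; the tripod of the second bullet is likewise impossible for degree reasons, so the only achievable outcome is the third bullet, obtained by contracting $(w,u_1)$ to a directed triangle. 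The same conflation breaks your Case (a): if $v$ is an interior vertex of a one-directional segment of $P\cup Q'$ (for instance, replace $u_3$ by a vertex $p$ in the $4$-cycle above and attach $u_3$ to $p$ by two antiparallel edges, so that $v_A=v_B=p$), then $v$ is not ``internal to $F_{12}$'' in any usable sense, the claimed tripod does not exist, a double path through $u_1,u_2,u_3$ does not exist either, and once again only the triangle outcome is available. So the case distinction cannot be made on the minor $F_{12}$; it has to be carried out on $P\cup Q'$ itself, distinguishing whether the attachment points of $u_3$ land on common segments of the laced pair (where a genuine digon vertex can be rooted) or on segments used by only one of the two paths -- which is exactly the situation that forces the directed $3$-cycle even when $v_A=v_B$, and is the reason the third bullet appears in the statement at all.

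Case (b) and your final repair step inherit the same problem. After choosing $Z\subseteq P\cup Q'$, the assertion that ``the two remaining double-path segments of $F_{12}$ attach $u_1$ and $u_2$ to $C'$ with no further interaction'' is unjustified: $v_A$ and $v_B$ are arbitrary vertices of $P\cup Q'$, the internal vertices of $Z$ lie on $P\cup Q'$ and need not have in- and out-degree one in whatever subgraph you retain, and what remains of $P\cup Q'$ after removing $Z$ need not split into two double paths ending at vertices of the contracted triangle, let alone into disjoint ones as the third bullet requires. Likewise, when every $V(F_{12})$--$u_3$ path meets $A$ internally, obtaining a double path from $u_3$ to $w$ together with a separate $w$--$V(F_{12})$ path is not the configuration of Case (a); it still has to be merged with $F_{12}$, and the merging is exactly where the difficulty lies. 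None of this is merely presentational: in the configurations exhibited above the outcomes you claim are the wrong ones, so the case analysis needs to be redone at the level of the laced-path structure of $P\cup Q'$ and of where the in- and out-connections of $u_3$ attach to it.
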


\section{Constructing the centre}\label{sec:centre_infinite}

In this section we establish the foundation for the proof of~\cref{main_theorem}.
Given an infinite set of vertices $U$, we prove the existence of a certain strongly connected subgraph $A$ together with an infinite family of directed $A$--$U$~paths that intersect only in $A$.
\begin{lem}\label{lemgroundwork2}
	Let $D$ be a strongly connected directed graph. 
	For every infinite set $U \subseteq V(D)$ there exists a subgraph $A$ in $D$
	that is
	\begin{enumerate}[label=(\arabic*)]
		\item\label{itm:centre_1} a single vertex,
		\item\label{itm:centre_2} the union of an out-ray $R_1$, an in-ray $R_2$ and a family $(c_n P_n d_n: n\in \NN)$ of disjoint directed $R_1$--$R_2$~paths such that $R_1$ and $R_2$ intersect only in their common root $u$ and such that $c_n <_{R_1} c_m$, $d_n <_{R_2} d_m$ for every $n < m \in \NN$,
		\item\label{itm:centre_3} the union of an out-ray $R$ with root $r$ and a family $(P_n: n \in \NN)$ of directed $R$--$r$~paths that intersect only in $r$, or
		\item\label{itm:centre_4} the union of directed cycles $(C_j)_{j \in \NN}$ with the property that for every $j \neq k \in \NN$ the intersection $C_j\cap C_k$ is a directed path if $|j-k|=1$ and empty otherwise,
	\end{enumerate}
	such that there exist infinitely many directed $A$--$U$~paths in $D$ that are disjoint if $A$ is of type~\labelcref{itm:centre_2,itm:centre_3,itm:centre_4}, and non-trivial and disjoint outside $A$, if $A$ is of type~\labelcref{itm:centre_1}.
\end{lem}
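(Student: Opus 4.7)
The plan is to apply the star-comb lemma for arborescences (\cref{prop:star_comb_arborescences}) twice in succession, combined with \cref{prop:in_tree_or_knit_ray}, so that each of the four listed structures arises from one of the possible combinations of outputs. The required directed $A$--$U$~paths will in every case be obtained by truncating a fixed infinite disjoint family $(Q_n)_{n \in \NN}$ of directed $R$--$U$~paths, where $R$ is the out-ray produced in the first application.

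Fix $u \in U$ and use \cref{prop:spanning_arborescences} to obtain an out-arborescence $T$ rooted at $u$ with $U \subseteq V(T)$ and all leaves in $U$. Apply \cref{prop:star_comb_arborescences} to $T$ and $U$: a subdivided out-star immediately gives type~\labelcref{itm:centre_1} with $A := \{u\}$, whose spokes are the required paths. Otherwise we obtain an out-ray $R$ rooted at $u$ together with an infinite disjoint family $(Q_n)$ of directed $R$--$U$~paths. Apply \cref{prop:in_tree_or_knit_ray} to $R$. If it returns a knit in-ray $S$ rooted at some $r \in V(R)$, then $A := rR \cup S$ is a chain of cycles of type~\labelcref{itm:centre_4}; otherwise it returns an in-arborescence $T'$ rooted at $r$ that meets $R$ precisely in $r$ and in an infinite leaf set $L$. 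Since $uRr$ is finite we may discard finitely many indices so that $L \subseteq V(rR)$ and every $Q_n$ starts on $rR$.

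Next I apply \cref{prop:star_comb_arborescences} a second time, now to $T'$ and $L$. An in-star output yields infinitely many internally disjoint directed $L$--$r$~paths which together with $rR$ form a subgraph of type~\labelcref{itm:centre_3}. In the remaining sub-case $T'$ contains an in-ray $S'$ rooted at $r$ together with an infinite disjoint family of directed $L$--$S'$~paths $(P_n)$, with startvertices $c_n \in L \subseteq V(rR)$ and endvertices $d_n \in V(S')$. A non-root vertex of $S'$ has an incoming edge in $T'$ and hence is not a leaf of $T'$, so no vertex of $L$ lies on $S'$; this forces $rR \cap V(S') = \{r\}$ and, by the definition of an $L$--$S'$~walk, keeps the interiors of the $P_n$ disjoint from $V(rR) \cup V(S')$. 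Because $\leq_{rR}$ and $\leq_{S'}$ both have order type $\omega$, neither $(c_n)$ nor $(d_n)$ admits an infinite strictly decreasing subsequence; applying Ramsey's theorem (\cref{ramsey2}) to the four-colouring of pairs $\{n,m\}$ which records the joint $\leq_{rR}/\leq_{S'}$-orders of $(c_n,d_n)$ and $(c_m,d_m)$ therefore produces an infinite subfamily falling entirely in the ``both increasing'' class, and $rR \cup S' \cup \bigcup P_n$ is then a subgraph of type~\labelcref{itm:centre_2}. In every case the truncation of each $Q_n$ between its last vertex in $A$ and the first subsequent vertex in $U$ supplies an infinite family of pairwise disjoint directed $A$--$U$~paths.

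The main obstacle is the final sub-case: besides weaving the rungs $(P_n)$ with $rR$ and $S'$ into a ladder, one must verify that $S'$ avoids $R$ away from $r$ and rule out any ``crossed'' ordering of the $(P_n)$. The first point follows from the structure of leaves of in-arborescences; the second relies on the fact that rays are well-ordered, so that only one of the four colour classes in the Ramsey argument survives.
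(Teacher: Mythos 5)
Your proof follows essentially the same route as the paper's: \cref{prop:spanning_arborescences} to get an out-arborescence, \cref{prop:star_comb_arborescences} to split into the star case (type~\labelcref{itm:centre_1}) and the ray case, \cref{prop:in_tree_or_knit_ray} to split the ray case into the knit-ray case (type~\labelcref{itm:centre_4}) and the in-arborescence case, and a second application of \cref{prop:star_comb_arborescences} to split the in-arborescence case into types~\labelcref{itm:centre_3} and~\labelcref{itm:centre_2}, with the $A$--$U$~paths obtained by truncating the $R$--$U$~paths from the first application. Your Ramsey argument for getting the monotone rungs in type~\labelcref{itm:centre_2}, and the leaf/in-degree observation for $rR \cap V(S') = \{r\}$, spell out details the paper leaves implicit.

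One small imprecision in the type~\labelcref{itm:centre_3} case: the subdivided in-star is rooted at some $c \in V(T')$, not necessarily at $r$, so its branches are internally disjoint $L$--$c$~paths, not $L$--$r$~paths; if $c \neq r$, the extended $L$--$r$~paths all share the segment $cT'r$ and are far from internally disjoint. The fix (and what the paper does) is to take the out-ray of the type~\labelcref{itm:centre_3} subgraph to be the concatenation of the unique $c$--$r$~path in $T'$ with $rR$, rooted at $c$, so that the branches of the in-star become the required $R$--$c$~paths meeting only in $c$.
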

\noindent
See~\cref{fig:groundwork} for subgraphs of type~\labelcref{itm:centre_2,itm:centre_3,itm:centre_4}.
Note that all types of subgraphs in~\cref{lemgroundwork2} are strongly connected.

\begin{figure}[ht]
	
	\centering
	\begin{subfigure}[b]{0.8\textwidth}
	\begin{tikzpicture}
		
		\draw[edge][PastelOrange] (-0.95,2) to (-0.05,2.95);
		\draw[edge][CornflowerBlue] (0,1) to (-0.95,1.95);
		\draw[LavenderMagenta] (-1,2) node {\huge.};
		\draw (-1,1.5) node {$u$};
		
		\foreach \a in {0,4, 6}{
			\draw[edge] ({\a},2.95) to ({\a},1.05); 
			\draw[edge] ({\a+2},2.95) to ({\a+2},1.05);
			\draw[edge] ({\a+4},2.95) to ({\a+4},1.05); 
			
			\draw[edge][PastelOrange] ({\a+0.1},3) to ({\a+1.9},3);
			\draw[edge][PastelOrange] ({\a+2.1},3) to ({\a+3.9},3);
			\draw[edge][CornflowerBlue] ({\a+1.9},1) to ({\a+0.1},1);
			\draw[edge][CornflowerBlue] ({\a+3.9},1) to ({\a+2.1},1);
		}
		\foreach \a in {1,2,3,4, 5,6}{
			\draw[LavenderMagenta] ({\a*2-2},1) node {\huge.};
			\draw ({\a*2-2},0.5) node {$d_\a$};
			\draw ({\a*2-2},3.5) node {$c_\a$};
			\draw[LavenderMagenta] ({\a*2-2},3) node {\huge.};
		}
		
		\draw[edge][white] ({10.1},3) to ({11.9},3);
		\draw[edge][PastelOrange, path fading=east] ({10.1},3) to ({11.9},3);
		\draw[edge][white] ({11.9},1) to ({10.1},1);
		\draw[edge][CornflowerBlue, path fading=east] ({11.9},1) to ({10.1},1);
		\draw[edge][CornflowerBlue] ({10.3},1) to ({10.1},1);
		
		\draw[CornflowerBlue] (12.5,1) node {$R_2$};
		\draw[PastelOrange] (12.5,3) node {$R_1$};
		
	\end{tikzpicture}
		\caption{A subgraph of type~\labelcref{itm:centre_2}.}
	\end{subfigure} \hfill

	\begin{subfigure}[b]{0.8\textwidth}
	\begin{tikzpicture}

		\draw[edge][PastelOrange] (-1,1.55) to (-0.05,2.95);
		
		\draw[LavenderMagenta] (-1,1.5) node {\huge.};
		\draw (-1,1) node {$r$};

		\foreach \a in {0,4,8}{
			\draw[edge] ({\a+2},2.9) to[bend left] ({-0.8},1.6);
			\draw[edge] (\a,2.9) to[bend left] (-0.8,1.6);
			\draw[edge][PastelOrange] ({\a+0.1},3) to ({\a+1.9},3);
			\draw[edge][PastelOrange] ({\a+2.1},3) to ({\a+3.9},3);
		
		}
		\foreach \a in {1,2,3,4,5,6}{
			
			\draw ({\a*2-2},3.5) node {$v_\a$};
			\draw[LavenderMagenta] ({\a*2-2},3) node {\huge.};
		}
		
		\draw[edge][white] ({10.1},3) to ({11.9},3);
		\draw[edge][PastelOrange, path fading=east] ({10.1},3) to ({11.9},3);
		
		\draw[edge, path fading=east] (12,2.9) to[bend left] (-0.8,1.6);
		
		\draw[PastelOrange] (12.5,3) node {$R$};
	\end{tikzpicture}
			\caption{A subgraph of type~\labelcref{itm:centre_3}.}
	\end{subfigure}\hfill

	\begin{subfigure}[b]{0.8\textwidth}
	\begin{tikzpicture}
		
		\foreach \a in {1,5, 9}{
			\draw[edge][CornflowerBlue] ({\a+0.1},0) to ({\a+1.9},0);
			\draw[edge][CornflowerBlue] ({\a+2},0.1) to ({\a+2},1.9);
			\draw[edge][PastelOrange, dashed] ({\a+2},0.1) to ({\a+2},1.9);
			\draw[edge][CornflowerBlue] ({\a+1.9},2) to ({\a+0.1},2);	
			\draw[edge][CornflowerBlue] (\a,1.9) to ({\a},0.1);
			\draw[edge][PastelOrange, dashed] (\a,1.9) to ({\a},0.1);		
			
			\draw[edge][PastelOrange] ({\a+3.9},0) to ({\a+2.1},0);
			\draw[edge][PastelOrange] ({\a+2.1},2) to ({\a+3.9},2);
			
			\draw[] (\a,0) node {\huge.};
			\draw[] (\a,2) node {\huge.};
			\draw[] (\a+2,0) node {\huge.};
			\draw[] (\a+2,2) node {\huge.};
			\draw[] (\a+4,0) node {\huge.};
			\draw[] (\a+4,2) node {\huge.};
		}
		\draw[edge][CornflowerBlue] (13,1.9) to ({13},0.1);
		\draw[edge][PastelOrange, dashed] (13,1.9) to ({13},0.1);
		
		\draw[edge][CornflowerBlue] (1,1.9) to (1,0.1);
		
		\draw[] ({1},2) node {\huge.};
		
		\foreach \a in {1,3,5}{
			\draw[CornflowerBlue] ({2*\a},1) node {$C_\a$};
		}
		
		\foreach \a in {2,4,6}{
			\draw[PastelOrange] ({2*\a},1) node {$C_\a$};
		}
		
		
		\draw[edge][CornflowerBlue, path fading=east] (15,2) to (13.1,2);
		\draw[edge][CornflowerBlue] (13.2,2) to (13.1,2);
		\draw[edge][CornflowerBlue, path fading=east] (13.1,0) to (15,0);

	\end{tikzpicture}
			\caption{A subgraph of type~\labelcref{itm:centre_4}.}
	\end{subfigure}
	\caption{Different types of subgraphs in~\cref{lemgroundwork2}.}
	\label{fig:groundwork}
\end{figure}

	\begin{proof}
		We assume without loss of generality that $U$ is countable.
		Let $T$ be an out-arborescence in $D$ that contains infinitely many elements of $U$ and whose leaves are contained in $U$, which exists by \cref{prop:spanning_arborescences}.
		
		We apply \cref{prop:star_comb_arborescences} to $T$ and $U$.
		If $T$ contains an out-star with leaves in $U$, then the root $r$ of this out-star is of type~\labelcref{itm:centre_1} and its branches form the desired $r$--$U$~paths that intersect only in $r$.
		Thus we can assume that $T$ contains an out-ray $R$ together with infinitely many disjoint (possibly trivial) directed $R$--$U$~paths $(P_i: i \in \NN)$.
		
		Since $D$ is strongly connected we can apply \cref{prop:in_tree_or_knit_ray} to $R$, which yields that there exists $r \in V(R)$ and either an in-arborescence $T'$ rooted at $r$ that intersects $V(R)$ exactly in $r$ and in infinitely many leaves of $T'$, or an in-ray $R'$ rooted at $r$ that is knit with $rR$.
		
		We begin with the former case and apply \cref{prop:star_comb_arborescences} to $T'$ and the set of leaves of $T'$ in $V(rR)$.
		If $T'$ contains a subdivided in-star $S$ with all leaves in $V(R) \cap V(T')$, let $c$ be the root of $S$.
		Then the concatenation of the directed $c$--$r$~path in $T'$ and the out-ray $rR$ forms together with the subdivided in-star $S$ a subgraph of type~\labelcref{itm:centre_3}.
		Otherwise, the in-arborescence $T'$ contains an in-ray $Q$ rooted at $r$ with infinitely many disjoint directed $rR$--$Q$~paths of $T'$.
		Note that $rR$ and $Q$ intersect only in $r$ since $T'$ intersects $rR$ only in leaves and in $r$.
		By omitting some of the directed $rR$--$Q$~paths, we ensure that the startvertices of the remaining directed $rR$--$Q$~paths appear in $\leq_R$-increasing order.
		Thus we obtained a subgraph of type~\labelcref{itm:centre_2}.
		
		We turn our attention to the latter case, i.e.\ $rR \cup R' = \bigcup_{i \in \NN} C_i$ for a sequence $(C_i)_{i \in \NN}$ of directed cycles with the property that for every $i \neq j \in \NN$ $C_i \cap C_j$ is a directed path if $|i-j|=1$ and empty otherwise.
		Then $rR \cup R'$ is a subgraph of type~\labelcref{itm:centre_4}.
		
		In both cases, let $A$ be the subgraph that we chose.
		It remains to define infinitely many disjoint directed $A$--$U$~paths.
		Since almost all directed paths $(P_i: i \in \NN)$ start in $V(rR)$, almost all directed paths $(P_i: i \in \NN)$ have a terminal segment that forms a directed $A$--$U$~path.
		Clearly, these terminal segments are disjoint and thus serve as the desired disjoint directed $A$--$U$~paths, which completes the proof.
	\end{proof}

\section{Forcing butterfly minors shaped by a star} \label{sec:forcing_stars}
In this section we investigate the structure of directed paths from $U$ to $A$ in the setting of~\cref{lemgroundwork2}, i.e.\ given a subgraph $A$ and infinitely many directed $A$--$U$~paths $(P_i: i \in \NN)$ that are disjoint, if $A$ is not a single vertex, or otherwise non-trivial and disjoint outside $A$.
We aim for~\cref{lem:star_or_back_path}, which shows that there exists either a butterfly minor shaped by a star with all teeth in $U$ or a family $(Q_i : i \in I)$ of directed paths from $U$ to $A$ for some infinite subset $I \subseteq \NN$ such that $Q_i$ starts in the endvertex of $P_i$ and $(P_i \cup Q_i) \cap (P_j \cup Q_j) = \emptyset$ for every $i \neq j \in I$.

The proof of~\cref{lem:star_or_back_path} can be outlined as follows:
Let $(Q_i: i \in \NN)$ be an infinite family of directed paths such that $Q_i$ starts in the endvertex of $P_i$ and ends in $A$.
If there is an infinite subset $J \subseteq \NN$ such that the directed paths in $(Q_j: j \in J)$ are disjoint, we construct the desired set $I \subseteq J$.
Otherwise, we will construct a butterfly minor shaped by a star, which is the main task in this section.

We begin by proving a sufficient condition for the existence of a butterfly minor shaped by a star.
\begin{lem}\label{lemhelp}
	Let $T$ be an out-arborescence and let $R$ be an in-ray with common root $r$ such that $R$ and $T$ intersect only in $r$ and in infinitely many leaves of $T$.
	Further, let $U$ be a set of vertices and let $(W_i: i \in \NN)$ be a family of disjoint directed $R$-$R$-walks that intersect $T$ only in $V(R)$ and each $W_i$ contains an element of $U$.
	Then there exists a butterfly-minor shaped by a star with all teeth in~$U$.
\end{lem}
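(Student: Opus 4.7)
The plan is to construct a butterfly minor shaped by a star whose dominated-directed-ray centre runs through an infinite sequence of leaves of $T$ on $R$, with spokes built from the walks $W_i$.

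For each $i$ I first extract from $W_i$ two directed subpaths: $P_i$ from the last $R$-vertex on $W_i$ before $u_i$, denote it $s_i'$, to $u_i$, and $Q_i$ from $u_i$ to the first $R$-vertex on $W_i$ after $u_i$, denote it $t_i'$. Since $W_i$ meets $T$ only in $V(R)$, the interiors of $P_i$ and $Q_i$ avoid $V(R)\cup V(T)$. Passing to an infinite subfamily I assume the $u_i$ are pairwise distinct, and by \cref{obs:reversing} I may additionally assume $s_i'\leq_R t_i'$ throughout. A Ramsey extraction along $R$ then splits the argument into the \emph{unbounded case}, where the $t_i'$ escape to infinity along $R$, and the \emph{bounded case}, where pigeonhole furnishes common values $s^*=s_i'$ and $t^*=t_i'$ for infinitely many walks.

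In the unbounded case I pick an infinite strictly $\leq_R$-increasing sequence $\ell_1<_R\ell_2<_R\cdots$ of leaves of $T$ on $R$ with $s_i'\leq_R\ell_i\leq_R t_i'$ and with the intervals $[s_i',t_i']$ pairwise $\leq_R$-disjoint. The butterfly minor has vertex set $\{w_0\}\cup\{w_i:i\in\NN\}\cup\{u_i:i\in\NN\}$, with $w_0$ the dominating root of the ray. I let $\mu(w_0)$ be the sub-out-arborescence of $T$ consisting of the $T$-paths from $r$ to the $\ell_i$ with the $\ell_i$ themselves removed, so that $\mu(w_0)\cap V(R)=\{r\}$. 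I let $\mu(w_i)$ be $\ell_i$ together with the $R$-vertices strictly between $\ell_i$ and the previous anchor (or $r$ for $i=1$), split into an in- and an out-arborescence sharing only $\ell_i$. Then the dominating edge $r\to w_i$ is realised by the $T$-edge into $\ell_i$, and the in-ray edge $w_{i+1}\to w_i$ by the last $R$-edge into $\ell_i$. Each spoke $w_i\leftrightarrow u_i$ is built by applying \cref{prop:laced_paths} and \cref{obs:double_path} to $P_i$ and a $u_i$-$\ell_i$-path lying inside the directed cycle $\ell_i\to_R s_i'\to_{P_i} u_i\to_{Q_i} t_i'\to_R \ell_i$: the laced pair butterfly-contracts to a double path from $\ell_i$ to $u_i$, whose contracted internal subgraphs become the subdivision branch sets of the spoke.

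In the bounded case the common endpoints $s^*,t^*$ confine the useful action to a finite segment of $R$, and I would apply a K\"onig-style branching to the family $\{P_i\}$ of paths $s^*\to u_i$ with distinct $u_i$ to extract an infinite out-fan at some vertex $v$; combined with a symmetric in-fan from the $Q_i$ and the closing $R$-segment $R[t^*,s^*]$, the application of \cref{prop:laced_paths} and \cref{obs:double_path} to the resulting closed cycles produces double paths, and the minor is then shaped by a star centred at $v$.

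The main obstacle I expect is ensuring pairwise disjointness of the spokes' interior branch sets. Distinct walks can share interior vertices in $D\setminus R\setminus T$, which I handle by an additional extraction; in the degenerate situation where no disjoint subfamily exists, some vertex lies on infinitely many walks and produces the desired star directly via its neighbourhoods into distinct $u_i$. A secondary difficulty is that the $R$-vertices in each interval $[s_i',t_i']$ are claimed both by the spoke's double-path contraction and by the dominated-ray machinery; realising both the in-ray edge $w_{i+1}\to w_i$ and the spoke then requires a careful bookkeeping of which vertices belong to which branch set.
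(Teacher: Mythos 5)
Your overall strategy matches the paper's: construct a dominated‑directed‑ray centre running along the in‑ray $R$, with the walks $W_i$ contracted via \cref{prop:laced_paths} and \cref{obs:double_path} into double‑path spokes attached to anchors on $R$ near leaves of $T$. However, the proposal has two genuine gaps, the second of which is precisely where the real work of the paper's proof lies.

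First, you require a leaf $\ell_i$ of $T$ with $s_i'\leq_R\ell_i\leq_R t_i'$, i.e.\ \emph{inside} the $R$‑footprint of $W_i$. There is no reason such a leaf need exist: the walks intersect $T$ only in $V(R)$ and nothing forces $T$ to have a leaf between $s_i'$ and $t_i'$. The paper instead chooses, after passing to a subfamily, a leaf $t_n$ strictly \emph{between} the footprints of $W_n$ and $W_{n+1}$ (with $a_n,b_n<_Rt_n<_Ra_{n+1},b_{n+1}$), which is always achievable because the leaves of $T$ are cofinal on $R$ and the footprints can be spaced out.

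Second, and more seriously, you end by conceding that the $R$‑vertices in $[s_i',t_i']$ are claimed both by the spoke's double path and by the dominated‑ray branch sets, and you defer this to ``careful bookkeeping.'' This is not a bookkeeping issue — it is the central obstacle, and a naive branch‑set assignment such as the one you sketch does not yield disjoint branch sets (e.g.\ $t_i'$ lands in $\mu(w_{i+1})$ while your spoke for $w_i$ needs to traverse the $R$‑edge into $\ell_i$ from beyond it). The paper's key move, which your proposal is missing, is to \emph{delete} the $R$‑segment between the walk's two endpoints on $R$ and \emph{reroute} the in‑ray through the walk itself (the unique directed $a_n$--$b_n$ path $O_n\subseteq P_n\cup Q_n$ guaranteed by lacedness), obtaining a new in‑ray $R'$; only then is an interval of $R'$ contracted onto a single vertex $t_n$ to form the dominated‑ray node. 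After this delete‑and‑reroute, the terminal segments $P_n',Q_n'$ of the walk meet $R'$ only in their single endpoint, so the spoke's interior is genuinely disjoint from the ray's branch sets and \cref{obs:double_path} applies cleanly. Without an analogue of this step, your construction does not produce a valid tree‑like model.

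As a minor remark, your ``bounded case'' need not arise: in the lemma's intended use the walks are pairwise disjoint (this is arranged in \cref{lem:star_or_back_path} before invoking \cref{lemhelp}), so $s_i'$ and $t_i'$ are automatically distinct across $i$ and escape to infinity along $R$; the K\"onig‑style branching you sketch there would anyway need to be made precise to be checkable.
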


\begin{figure}[ht]
	\begin{tikzpicture}

		\draw[edge][PastelOrange] (-0.05,2.95) to (-1,2.1);  
		
		\draw[LavenderMagenta] (-1,2) node {\huge.};
		\draw (-1,1.5) node {$r$};    
		
		\foreach \a in {0,4,8}{
			\draw[edge] ({\a+2},2.1) to ({\a+2},2.9);
			\draw[edge][PastelOrange] ({\a+1.9},3) to ({\a+0.1},3);
			\draw[edge][PastelOrange] ({\a+3.9},3) to ({\a+2.1},3);
		}
		
		\foreach \a in {0,4}{
			\draw[edge] ({-0.9},2) to[bend right] ({\a+2},1.9);
		}
		\draw[edge] (6.1,2) to (9.9,2);
		\draw[edge, path fading=east] (10.1,2) to (11.9,2);
		
		\foreach \a in {1,3,5}{
			\draw[] ({\a*2-2},3) node {\huge.};
			\draw[] ({\a*2},3) node {\huge.};
			\draw[] ({\a*2},2) node {\huge.};
		}
		
		\draw[edge] (2.1,2) to (4,2.9);
		
		\draw[edge][white] ({11.9},3) to ({10.1},3);
		\draw[edge][PastelOrange, path fading=east] ({11.9},3) to ({10.1},3);
		\draw[edge][PastelOrange] ({10.5},3) to ({10.1},3);
		
		\draw[edge][CornflowerBlue, bend left] (0,3.1) to (0.9,4.5);
		\draw[edge][CornflowerBlue, bend left] (1.1,4.5) to (2,3.1);
		\draw[CornflowerBlue] (1,4.5) node {\huge.};
		\draw[CornflowerBlue] (1,4.7) node {$u_1$};
		\draw[CornflowerBlue] (1,4) node {$W_1$};
		
		\draw[edge][CornflowerBlue, bend left] (5.1,4.5) to (6,4.1);
		\draw[edge][CornflowerBlue, bend left] (4,4.1) to (4.9,4.5);
		\draw[CornflowerBlue] (5,4.5) node {\huge.};
		\draw[CornflowerBlue] (5,4.7) node {$u_2$};
		\draw[CornflowerBlue] (5,3.5) node {$W_2$};
		\draw[CornflowerBlue] (4,4) node {\huge.};
		\draw[CornflowerBlue] (6,4) node {\huge.};
		\draw[edge][CornflowerBlue] (4,3.9) to (4,3.1);
		\draw[edge][CornflowerBlue] (6,3.1) to (6,3.9);
		\draw[edge][CornflowerBlue] (5.9,4) to (4.1,4);
		
		\draw[edge][CornflowerBlue] (8,3.1) to (8,4.4);
		\draw[edge][CornflowerBlue] (8,4.4) to (8,3.1);
		\draw[CornflowerBlue] (8,4.5) node {\huge.};
		\draw[CornflowerBlue] (8,4.7) node {$u_3$};
		\draw[CornflowerBlue] (8.5,4) node {$W_3$};
		
		\draw[PastelOrange] (11,2.5) node {$R$};
		\draw[] (4,2) node {$T$};
		
		\draw[] (-0.3,3.3) node {$a_1$};
		\draw[] (2.3,3.3) node {$b_1$};
		\draw[] (3.7,3.3) node {$b_2$};
		\draw[] (6.3,3.3) node {$a_2$};
		\draw[] (3.7,4.1) node {$a_2'$};
		\draw[] (6.3,4.1) node {$b_2'$};
		\draw[] (8.8,3.3) node {$a_3=b_3$};
		
	\end{tikzpicture}
	\caption{An in-ray $R$, an out-arborescence $T$ and directed walks $W_1, W_2, W_3$ as in \cref{lemhelp}. }
	\label{fig:lemhelp}
\end{figure}

\begin{proof}
	By \cref{prop:laced_paths} we can assume that $W_i$ is the union of a directed $R$--$U$~path $P_i$ and a directed path $Q_i$ starting in the endvertex of $P_i$ and intersecting $R$ only in its endvertex such that $P_i, Q_i$ are laced for every $i \in \NN$.
	Let $a_i$ be startvertex of $W_i$ and let $b_i$ be the endvertex of $W_i$.
	See~\cref{fig:lemhelp}.
	
	First, we pick an infinite subset $I \subseteq \NN$ such that for every $i < j \in I$ there exists a leave $t(i,j)$ of $T$ in $V(R)$ such that $a_i, b_i <_R t(i,j) <_R a_j, b_j$, which can be constructed recursively.
	For simplicity, we assume without loss of generality that $I = \NN$ and set $t_n := t(n, n+1)$.
	Further, let $T'$ be the out-arborescence in $T$ rooted at $r$ that has leaves exactly in $\{t_n: n \in \NN\}$.
	
	Second, we take a butterfly minor $D'$ of $D:= T' \cup R \cup \bigcup_{i \in \NN} W_i$ by deleting all edges and all internal vertices of $b_n R a_n$ for every $n \in \NN$ with $b_n <_R a_n$.
	Note that there exists a unique directed $a_n$--$b_n$~path $O_n$ in $P_n \cup Q_n$ for every $n \in \NN$ with $b_n <_R a_n$ since $P_n, Q_n$ are laced.
	Let $R'$ be the in-ray in $D'$ obtained from $R$ by replacing $b_n R a_n$ by $b_n O_n a_n$ for every $n \in \NN$ with $b_n <_R a_n$.
	Further, let $P_n'$ be the terminal segment of $P_n$ that intersects $R'$ only in its startvertex $a_n'$ and let $Q_n'$ be the terminal segment of $Q_n$ that intersects $R'$ only in its endvertex $b_n'$.
	Note that $a_n' \leq_{R'} b_n'$ for every $n \in \NN$ by construction of $R'$ and since $P_n$ and $Q_n$ are laced.
	
	Third, we take a butterfly minor $D''$ of $D'$ by butterfly contracting $a_n' R' t_n$ to a single vertex for every $n \in \NN$.
	Note that this is possible since all vertices of $\{v \in V(R'): a_n' <_{R'} v \leq_{R'} t_n \}$ have out-degree $1$ in $D'$.
	
	The directed graph $D''$ consists of an in-ray $R'$ and an out-arborescence $T'$ with common root $r$ that intersect only in $r$ and in infinitely many leaves of $T'$ together with directed $R'$--$U$~paths $(P_n')_{n \in \NN}$ and directed paths $(Q_n')_{n \in \NN}$ intersecting $V(R')$ only in its endvertex such that
	\begin{itemize}
		\item the startvertex of $P_n'$ coincides with the endvertex of $Q_n'$ and a leave of $T'$,
		\item the endvertex of $P_n'$ coincides with the startvertex of $Q_n'$ and is an element of $U$,
		\item $P_n'$ and $Q_n'$ intersect $T'$ only in $V(R')$, and
		\item $P_n'$ and $Q_n'$ are laced.
	\end{itemize}
	
	Finally, we butterfly contract $T'$ to an out-star, suppress all subdivision vertices of $R'$ and butterfly contract $P_i' \cup Q_i'$ to a double path for every $i \in \NN$, which is possible by \cref{obs:double_path}.
	This forms a butterfly minor shaped by a star with all teeth in $U$ whose centre is a dominated directed ray.
\end{proof}

\begin{lem}\label{lem:star_or_back_path}
	Let $D$ be a strongly connected directed graph and let $A$ be a strongly connected subgraph of $D$.
	Further, let $U \subseteq V(D)$ and let $(P_i: i \in \NN)$ be an infinite family of directed $A$--$U$~paths that are disjoint, if $A$ is not a single vertex, and otherwise are non-trivial and disjoint outside of $A$.
	Then there exists 
	either a butterfly minor of $D$ that is shaped by a star with all teeth in $U$
	or
	an infinite subset $I \subseteq \NN$ and a family $(Q_i: i \in I)$ of directed paths such that
	\begin{itemize}
	\item $Q_i$ starts in the endvertex of $P_i$ for every $i \in I$,
	\item  $Q_i$ has precisely its endvertex in $V(A)$ for every $i \in I$,
	\item $P_i$ and $Q_i$ are laced for every $i \in I$, and
	\item $(P_i \cup Q_i) \cap (P_j \cup Q_j) = \emptyset$ for every $i \neq j \in I$.
\end{itemize}
\end{lem}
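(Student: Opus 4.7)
The plan is a greedy recursion: build $I \subseteq \NN$ and $(Q_i)_{i \in I}$ index by index; if the recursion runs forever we land in the second alternative, and if it halts at a finite stage we invoke \cref{lemhelp} to extract a butterfly minor shaped by a star. Throughout, write $a_i, u_i$ for the start- and endvertex of $P_i$, and let $a^\ast$ denote the common startvertex of the $P_i$'s in the second hypothesis case (if any).

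\emph{Candidate construction.} For each $i$, by strong connectivity of $D$ there is a directed $u_i$--$V(A)$ path $\tilde Q_i$ internally disjoint from $V(A)$; let $v_i$ be its endvertex. Applying \cref{prop:laced_paths} to $P_i$ and $\tilde Q_i$ yields a $u_i$--$v_i$ path $Q_i' \subseteq P_i \cup \tilde Q_i$ laced with $P_i$. Since the only vertices of $V(A)$ appearing in $P_i \cup \tilde Q_i$ are $a_i$ and $v_i$, the only potential internal $V(A)$-vertex of $Q_i'$ is $a_i$; truncate $Q_i'$ at $a_i$ if it occurs internally, producing a candidate $Q_i$. A direct check of the laced definition (in which the last certificate point $y_1$ must equal $a_i$) confirms lacing is preserved by this truncation.

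\emph{Recursion.} At stage $n$, with $I_n = \{i_1,\dots,i_n\}$ already chosen, set $S_n := \bigcup_{k\leq n}(P_{i_k}\cup Q_{i_k})$ and $F_n := V(S_n)\setminus\{a^\ast\}$ (with $a^\ast$ omitted only when it exists). Among $i \in \NN\setminus I_n$ with $P_i \cap F_n = \emptyset$ — infinitely many such $i$ exist, because $F_n$ is finite and distinct $P_i$'s share at most $a^\ast$, so each vertex of $F_n$ lies on at most one $P_i$ — look for one admitting a candidate $Q_i$ with $Q_i \cap F_n = \emptyset$. If such $i$ is found infinitely often, iterate; the resulting family $(Q_i)_{i \in I}$ satisfies the second alternative. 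Otherwise the recursion halts at stage $n$: cofinitely many $i$'s admit no candidate avoiding $F_n$, so every $u_i$--$V(A)$ path internally disjoint from $V(A)$ meets $F_n$. Choosing any $\tilde Q_i$ and letting $b_i \in F_n$ be its first $F_n$-vertex, pigeonhole on the finite set $F_n$ supplies an infinite subset $J \subseteq \NN\setminus I_n$ and a common vertex $b \in F_n$ with $b \in V(\tilde Q_i)$ for every $i \in J$.

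\emph{Star via~\cref{lemhelp}.} Using $b$ as the common ``funnel'', we assemble the hypothesis of \cref{lemhelp}. Strong connectivity of $A$ together with \cref{prop:spanning_arborescences,prop:star_comb_arborescences,prop:in_tree_or_knit_ray} supplies an in-ray $R$ in $D$ rooted at some $r$ (reachable forwards and backwards from $b$ through $A$) and an out-arborescence $T$ with the same root $r$, meeting $R$ exactly in $r$ and in infinitely many leaves of $T$ on $V(R)$. Routing each $P_i \cup \tilde Q_i$ for $i \in J$ through $b$ and onwards back to $V(R)$ via $A$ produces directed $R$--$R$-walks that intersect $T$ only in $V(R)$ and each contain a vertex $u_i \in U$, so \cref{lemhelp} delivers the required butterfly minor shaped by a star. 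The main obstacle is this final assembly: concretely building the in-ray/out-arborescence pair and rerouting the paths so that the walks $W_i$ touch $T$ only on $V(R)$, in particular controlling where the laced paths re-enter $A$ at $v_i$. A secondary technical point is the lacing check after the truncation at $a_i$ in the candidate construction.
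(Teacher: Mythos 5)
Your greedy recursion and the candidate construction via \cref{prop:laced_paths} (with the truncation at $a_i$) are sound, and they do reproduce the ``infinitely many weak components'' branch of the paper's proof. The genuine gap lies exactly where you flag it, but it is not a technical detail to be filled later --- it is the crux of the lemma.

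When the recursion halts, your pigeonhole yields only a single ``funnel'' vertex $b \in F_n$ through which one chosen $\tilde Q_i$ passes for each $i$ in an infinite $J$. That is far too little structure to instantiate \cref{lemhelp}: the hypothesis there requires an \emph{in-ray} $R$ and an out-arborescence $T$ sharing the root $r$ and meeting only in $r$ and in infinitely many leaves of $T$ on $R$, together with $R$--$R$ walks touching $T$ only in $V(R)$. None of these objects is produced by the pigeonhole. In particular, $A$ is permitted to be a single vertex in this lemma (that is exactly how it is applied when the groundwork lemma returns type~(1)), so ``an in-ray reachable forwards and backwards from $b$ through $A$'' need not exist, and \cref{prop:in_tree_or_knit_ray} cannot be invoked because it requires an out-ray to begin with. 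Moreover, the walks $W_i$ in \cref{lemhelp} must start and end on $R$, but your paths $P_i \cup \tilde Q_i$ start and end in $V(A)$, not on any ray, and there is no control on where $\tilde Q_i$ re-enters $A$; ``rerouting through $A$'' is not justified. Finally, even if $b$ were a legitimate hub, a single vertex is morally the \emph{in-star} case, and the paper shows that case does \emph{not} go through \cref{lemhelp}: it leads to the condition $(P_j \cap Q_k)\subseteq\{r\}$ and a direct contraction to a star-shaped minor at the end of the proof, whereas \cref{lemhelp} is reserved for the \emph{in-ray} case, where the $P_j$'s actually meet $R - r$.

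The missing idea that makes the halting case tractable is to fix the return paths inside a single in-arborescence \emph{before} the recursion. The paper first applies \cref{prop:spanning_arborescences} to obtain an in-arborescence $T$ rooted in $A$ containing $U$, and takes $Q_i$ \emph{within $T$}. Then $\bigcup_i Q_i$ is a union of in-arborescences, and \cref{prop:star_comb_arborescences} cleanly splits the non-disjoint case into an in-star (leading to the $(P_j\cap Q_k)\subseteq\{r\}$ reduction and direct star construction via lacing and contraction) and an in-ray (where, after a Ramsey argument on whether $P_j$ meets $R-r$, \cref{lemhelp} applies). With arbitrary $\tilde Q_i$ chosen only by strong connectivity, the union $\bigcup_i \tilde Q_i$ has no such structure and the star-comb lemma is unavailable; the pigeonhole alone cannot recover it.
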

\begin{proof}
    We can assume without loss of generality that $U$ is countable.
	By~\cref{prop:spanning_arborescences}, there exists an in-arborescence  $T$ in $D$ rooted in some vertex of $A$ containing $U$.
	Let $(Q_i: i \in \NN)$ be a family of directed paths in $T$ such that $Q_i$ starts in the endvertex of $P_i$ and intersects $A$ precisely in its endvertex.
	Note that every weak component of $\bigcup_{i \in \NN} Q_i$ is an in-arborescence, by construction.
	
	If $\bigcup_{i \in \NN} Q_i$ has infinitely many weak components, $A$ is not a single vertex and thus the directed paths $(P_i: i \in \NN)$ are disjoint.
	Furthermore, there exists an infinite subset $J \subseteq I$ such that the directed paths $(Q_i: i \in J)$ are disjoint.
	We construct the desired infinite subset $I \subseteq J$ recursively in the following way:
	Suppose that we have already constructed a finite set $I' \subseteq J$ such that $(P_i \cup Q_i) \cap (P_j \cup Q_j) = \emptyset$ for every $i \neq j \in I'$.
	Then there exists $j \in J \setminus I'$ such that $P_j$ and $Q_j$ avoid the finite subgraph $\bigcup_{i \in I'} P_i \cup Q_i$.
	We continue with $I' \cup \{j\}$.
	After countably many steps we obtain an infinite set $I \subseteq \NN$ such that $(P_i \cup Q_i) \cap (P_j \cup Q_j) = \emptyset$ for every $i \neq j \in I$.
	Applying \cref{prop:laced_paths} to $Q_i$ and $P_i$ gives a directed path $\Tilde{Q}_i$ with the same start- and endvertex as $Q_i$ which is laced with $P_i$ and $P_i \cup \Tilde{Q}_i \subseteq P_i \cup Q_i$.
	Thus $I$ and $(\Tilde{Q}_i: i \in I)$ are as desired.
	
	Thus we can assume that $\bigcup_{i \in \NN} Q_i$ has finitely many weak components.
	For simplicity, we assume without loss of generality that $\bigcup_{i \in \NN} Q_i$ is a single weak component.
	We show that there exists a butterfly minor of $D$ that is shaped by a star.
	
	Let $r$ be the root of the in-arborescence $\bigcup_{i \in \NN} Q_i$.
	Note that if $A$ is a single vertex, then $A=\{r\}$ and thus $r$ is the common startvertex of the directed paths $(P_i: i \in \NN)$.
	We apply \cref{prop:star_comb_arborescences} to $\bigcup_{i \in \NN} Q_i$ and the set of all startvertices of $(Q_i: i \in \NN)$, to obtain either a subdivided in-star $S$ whose leaves are in $U$ or an in-ray $R$ rooted in $r$ together with infinitely many disjoint directed $U$--$R$~paths~$(O_n: n \in \NN)$.
	Next, we show that in both cases there is either the desired butterfly minor shaped by a star with all teeth in $U$ or an infinite subset $J \subseteq \NN$ such that $(P_j \cap Q_k) \subseteq \{r\} $ for every $j \neq k \in J$.

	\begin{description}
		\item[If there exists a subdivided in-star $S$] See~\cref{fig:Tisstar}. Let $c$ be the root of the subdivided in-star~$S$ and let $P$ be the unique directed $c$--$r$~path in $\bigcup_{i \in \NN} Q_i$.
		For simplicity, we assume without loss of generality that all directed paths $(Q_i: i \in \NN)$ are contained in $S \cup P$.
		Since $P$ is finite, we can assume without loss of generality for simplicity that all directed paths $(P_i: i \in \NN)$ avoid $P -r$.
		We construct a strictly increasing sequence $(J(n))_{n \in \NN}$ of finite subsets of $\NN$ such that 
		$(P_j \cap Q_k) \subseteq \{r\} $ for every $n \in \NN$ and every $j \neq k \in J(n)$.
		
		Set $J(0):= \emptyset$ and let $n \in \NN$ be such that $J(n - 1)$ has been constructed.
		Since $\bigcup_{j \in J(n - 1)} P_j$ avoids $P - r$, there are infinitely directed paths in $(Q_i: i \in \NN)$ for which $Q_i -r$ avoids $\bigcup_{j \in J(n - 1)} P_j$.
		Since $\bigcup_{j \in J(n - 1)} Q_j -r \subseteq V(D) \setminus V(A)$ is finite, almost all directed paths in $(P_i: i \in \NN)$ avoid $\bigcup_{j \in J(n - 1)} Q_j -r $.
		Thus there exists $k \in \NN$ such that $P_k$ avoids all $(Q_j - r: j \in J(n - 1))$ and $Q_k -r $ avoids all $(P_j: j \in J(n - 1))$.
		We set $J(n):= J(n-1) \cup \{k\}$.
		After countably many steps we obtain an infinite set $J := \bigcup_{n \in \NN} J(n)$ with the property $(P_j \cap Q_k) \subseteq \{r\} $ for every $j \neq k \in J$.
		
			\begin{figure}[ht]
	\begin{tikzpicture}

		\node[draw, circle, minimum size=2cm, fill=lightgray] at (-1,3) {$A$};
		
		\foreach \a in {0}{
			\draw[edge][CornflowerBlue] ({\a+1.9},3) to ({\a+0.1},3);
			\draw[edge][CornflowerBlue] ({\a+3.9},3) to ({\a+2.1},3);
		}

		\foreach \a in {2}{
			\draw[] ({\a*2-2},3) node {\huge.};
			\draw[] ({\a*2},3) node {\huge.};
		}
		
		\foreach \a in {1,2,3,4}{
			\draw[edge, CornflowerBlue] ({4+1.9*sin(30*\a-30)},{3+1.9*cos(30*\a-30)}) to ({4+0.2*sin(30*\a-30)},{3+0.2*cos(30*\a-30)});
			\draw[] ({4+2*sin(30*\a-30)},{3+2*cos(30*\a-30)}) node {\huge.};
			\draw[CornflowerBlue] ({4+2.5*sin(30*\a-30)},{3+2.5*cos(30*\a-30)}) node {$Q_\a$};
		}
		
		\draw[edge, CornflowerBlue, opacity=0.5] ({4+1.9*sin(30*5-30)},{3+1.9*cos(30*5-30)}) to ({4+0.2*sin(30*5-30)},{3+0.2*cos(30*5-30)});
		\draw[opacity=0.5] ({4+2*sin(30*5-30)},{3+2*cos(30*5-30)}) node {\huge.};
		\draw[CornflowerBlue, opacity=0.5] ({4+2.5*sin(30*5-30)},{3+2.5*cos(30*5-30)}) node {$Q_5$};
		
		\draw[edge, CornflowerBlue, opacity=0.3] ({4+1.9*sin(30*6-30)},{3+1.9*cos(30*6-30)}) to ({4+0.2*sin(30*6-30)},{3+0.2*cos(30*6-30)});
		\draw[opacity=0.3] ({4+2*sin(30*6-30)},{3+2*cos(30*6-30)}) node {\huge.};
		\draw[CornflowerBlue, opacity=0.3] ({4+2.5*sin(30*6-30)},{3+2.5*cos(30*6-30)}) node {$Q_6$};

		\draw[CornflowerBlue] (3.7,2.7) node {$c$};

		\draw[] (0,3) node {\huge.};
		\draw[] (0.1,2.7) node {$r$};
		
	\end{tikzpicture}
	\caption{The structure of $A \cup S \cup P$ if there exists a subdivided in-star $S$ in the proof of~\cref{lem:star_or_back_path}.}
	\label{fig:Tisstar}
	
\end{figure}
		
		\item[If there exists an in-ray $R$ with directed $U$--$R$~paths $(O_n: n \in \NN)$]
		See~\cref{fig:Tiscomb}.
		We assume without loss of generality for simplicity that $Q_n \subseteq O_n \cup R$ for every $n \in \NN$, i.e.\ $O_n$ is an initial segment of $Q_n$.
		We construct a strictly increasing sequence $(J(n))_{n \in \NN}$ of finite subsets of $\NN$ such that $(P_j \cap Q_k) \subseteq R$ for every $n \in \NN$ and every $j \neq k \in J(n)$.
		
		Set $J(0):= \emptyset$ and let $n \in \NN$ be such that $J(n - 1)$ has been constructed.
		Since $\bigcup_{j \in J(n-1)} P_j$ is finite almost all elements of $(O_n: n \in \NN)$ are disjoint to $\bigcup_{j \in J(n-1)} P_j$.
		Furthermore, almost all directed paths $(P_n: n \in \NN)$ are disjoint to $\bigcup_{j \in J(n-1)} Q_j -r $ since $\bigcup_{j \in J(n-1)} Q_j - r \subseteq V(D) \setminus V(A)$ is finite.
		Thus there exists $k \in \NN$ such that $Q_k$ intersects $(P_j: j \in J(n - 1))$ only in $R$, and $P_k$ intersects $\bigcup_{j \in J(n - 1)} Q_j$ at most in $r$.
		We set $J(n):= J(n-1) \cup \{k\}$.
		After countably many steps we obtain an infinite set $J := \bigcup_{n \in \NN} J(n)$ with the property that
		$(P_j \cap Q_k) \subseteq R $ for every $j \neq k \in J$.
		
		By Ramsey's theorem there exists an infinite subset $J' \subseteq J$ such that either $(P_j \cap Q_k) \subseteq \{r\}$ for every $j \neq k \in J'$ or $(P_j \cap Q_k) \not\subseteq \{r\}$ for every $j \neq k \in J'$.
		In the former case, $J'$ is as desired.
		
		In the latter case, i.e.\ each directed path $(P_j: j \in J')$ intersects $R -  r$, we will apply~\cref{lemhelp}.
		We partition $J'$ into two infinite set $K$ and $L$.
		Let $\hat{T}$ be an out-arborescence in $A \cup \bigcup_{k \in K} {P}_k$ rooted at $r$ with infinitely many leaves in $R$.
		Furthermore, let $\Tilde{P}_\ell$ be the unique terminal segment of $P_\ell$ that starts in $R$ and is otherwise disjoint to $R$ for every $\ell \in L$.
		Note that the concatenation $ \Tilde{P}_\ell \circ O_\ell$ forms a directed $R$--$R$~walk that contains some element of $U$.
		Moreover, $ \Tilde{P}_\ell \circ O_\ell$ intersects $\hat{T}$ only in $V(R)$ for every $\ell \in L$ since $\Tilde{P}_\ell$ is disjoint to $\hat{T}$ by definition of $\hat{T}$ and as $O_\ell \cap \bigcup_{k \in K} P_k \subseteq Q_\ell \cap \bigcup_{k \in K} P_k \subseteq R$ by choice of $J$.
		
		Since the directed paths $(\Tilde{P}_\ell: \ell \in L)$ are disjoint and the directed paths $(O_\ell: \ell \in L)$ are disjoint, each vertex of $D$ can be contained in at most two elements of $(\Tilde{P}_\ell \circ O_\ell: \ell \in L)$.
		Thus we can define recursively an infinite set $L' \subseteq L$ such that $(\Tilde{P}_\ell \circ O_\ell: \ell \in L')$ is a family of disjoint directed walks.
		Thus $\hat{T}, R, r, U$ and $(\Tilde{P}_\ell \circ O_\ell: \ell \in L')$ satisfy the conditions of~\cref{lemhelp}, which shows that there exists a butterfly minor shaped by a star with all teeth in $U$.

			\begin{figure}[ht]
	\begin{tikzpicture}

		\node[draw, circle, minimum size=2cm, fill=lightgray] at (-1,3) {$A$};
		
		\foreach \a in {0,4}{
			\draw[edge][CornflowerBlue] ({\a+1.9},3) to ({\a+0.1},3);
			\draw[edge][CornflowerBlue] ({\a+3.9},3) to ({\a+2.1},3);
		}
		
		\foreach \a in {2,4}{
			\draw[] ({\a*2-2},3) node {\huge.};
			\draw[] ({\a*2},3) node {\huge.};
		}
		
		\draw[edge][CornflowerBlue, path fading=east] ({9.9},3) to ({8.1},3);
		\draw[edge][CornflowerBlue] ({8.4},3) to ({8.1},3);
		
		\foreach \a in {1,2,3,4}{
			\draw[edge, CornflowerBlue] ({2*\a},4.9) to ({2*\a},3.1);
			\draw[] ({2*\a},5) node {\huge.};
			\draw[CornflowerBlue] ({2*\a-0.5},4) node {$O_\a$};
		}
		
		\draw[CornflowerBlue] (9,2.5) node {$R$};

		\draw[] (0,3) node {\huge.};
		\draw[] (0.1,2.7) node {$r$};
		
	\end{tikzpicture}
	\caption{The structure of $A \cup R \cup \bigcup_{n \in \NN} O_n$ if there exists an in-ray $R$ with directed $U$--$R$~paths $(O_n: n \in \NN)$ in the proof of~\cref{lem:star_or_back_path}.}
	\label{fig:Tiscomb}
\end{figure}
		
	\end{description}
	
	Finally, we can assume that there exists an infinite subset $J \subseteq \NN$ such that $(P_j \cap Q_k) \subseteq \{r\} $ for every $j \neq k \in J$.
	It remains to show that there exists a butterfly minor shaped by a star with all teeth in $U$.
	
	We apply \cref{prop:laced_paths} to $Q_j$ and $P_j$ to obtain a directed path $\Tilde{Q}_j$ that is laced with $P_j$, has the same startvertex and endvertex as $Q_j$ and $\Tilde{Q}_j \subseteq Q_j \cup P_j$ for every $j \in J$.
	See~\cref{fig:tilde}.
	We argue that $\Tilde{Q}_j$ intersects $A$ only in its last vertex $r$ and that $\Tilde{Q}_j \cap P_k \subseteq \{r\}$ for every $j \neq k \in J$.
	
	If $A$ is a single vertex, then $A=\{r\}$, which implies that $\Tilde{Q}_j$ intersects $A$ only in its last vertex.
	Furthermore, $r$ is the common startvertex of all directed paths $(P_k: k \in J)$, which implies $\Tilde{Q}_j \cap P_k \subseteq (Q_j \cup P_j) \cap P_k \subseteq \{r\}$ for every $j \neq k \in J$.
	
	Otherwise, the directed paths $(P_k : k \in J)$ are disjoint and we can assume without loss of generality that no directed path in $(P_k : k \in J)$ contains $r$.
	Then $Q_j \cap P_k = \emptyset$ for every $j \neq k \in J$, which implies $\Tilde{Q}_j \cap P_k \subseteq (Q_j \cup P_j) \cap P_k = \emptyset$ for every $j \neq k \in J$.
	Since $Q_k$ intersects $A$ only in $r \notin P_k$, the startvertex of $P_k$, which is an element of $A$, is not contained in $Q_k$ and thus has in-degree $0$ in $P_k \cup Q_k$. Therefore, $\Tilde{Q}_k$ does not contain the startvertex of $P_k$, which implies that $\Tilde{Q}_k$ intersects $A$ only in $r$.
	
	We show that $\bigcup_{j \in J} \Tilde{Q}_j$ is an in-arborescence:
	Let $v \in V(\bigcup_{j \in J} \Tilde{Q}_j) \setminus \{r\} \subseteq V(D) \setminus V(A)$ be some vertex that is contained in $\Tilde{Q}_j$ and $\Tilde{Q}_k$ for some $j, k \in J$.
	Since $V(Q_k \cap P_j) \setminus \{r\} = \emptyset$, $V(Q_j \cap P_k) \setminus \{r\} = \emptyset$ and $V(P_k \cap P_j) \subseteq V(A)$, $v \in V(\Tilde{Q}_j \cap \Tilde{Q}_k) \setminus \{r\} \subseteq V(Q_j \setminus P_j) \cap V(Q_k \setminus P_k)$.
	We can deduce that every edge of $\Tilde{Q}_j$ with tail in $v$ is contained in $Q_j$ and every edge of $\Tilde{Q}_k$ with tail in $v$ is contained in $Q_k$.
	Since $\bigcup_{j \in J} Q_j$ is an in-arborescence, exactly one edge of $Q_j \cup Q_k$ has tail in $v$.
	Thus exactly one edge of $\Tilde{Q}_j \cup \Tilde{Q}_k$ has tail $v$, which implies that $\bigcup_{j \in J} \Tilde{Q}_j$ is an in-arborescence.
	
	Let $T$ be some (possibly trivial) out-arborescence in $A$ rooted at $r$ that contains all startvertices of directed paths $(P_j: j \in J)$.
	Further, let $\hat{Q}$ be the subgraph of $\bigcup_{j \in J} \Tilde{Q}_j$ induced by all vertices that are contained in at least two elements of $(\Tilde{Q}_j: j \in J)$.
	Note that $\hat{Q}$ is an in-arborescence rooted at $r$ that intersects $\bigcup_{j \in J} P_j$ at most in $r$.
	We consider $T \cup  \bigcup_{j \in J} P_j  \cup \bigcup_{j \in J}  \Tilde{Q}_j$ and butterfly contract $T \cup \hat{Q}$ to a single vertex $r$.
	Further, we butterfly contract $P_j$ and $\Tilde{Q}_j \setminus \hat{Q}$ to a double path for every $j \in J$, which is possible by~\cref{obs:double_path}.
	The obtained directed graph is shaped by a star and has all teeth in~$U$.
\end{proof}

\begin{figure}
	\centering
	\begin{subfigure}{\textwidth}
		\centering
	\begin{tikzpicture}
		
		\draw[black] (2,1) node {\huge.};
		
		\foreach \a in {4,...,8}{
			\draw[edge][PastelOrange] ({\a+0.1},1) to ({\a+0.9},1);
			\draw[black] (\a,1) node {\huge .};
			\draw[black] ({\a+1},1) node {\huge .};
		}
		
		\draw[edge][CornflowerBlue] (4,1.1) to[bend right] (2.1,2);
		\draw[edge][CornflowerBlue] (8,1.1) to[bend right] (5,1.1);
		\draw[edge][CornflowerBlue] (6,0.9) to[bend left] (4,0.9);
		\draw[edge][CornflowerBlue] (9,0.9) to[bend left] (7,0.9);
		\draw[edge][CornflowerBlue] (0,1.1) to[bend left] (1.9,2);
		\draw[edge][CornflowerBlue] (2,1.9) to (2,1.1);

		\draw (2.1,0.6) node {$A = \{r\}$};

		\draw[black] (0,1) node {\huge.};
		\draw[black] (-1,1) node {\huge.};
		\draw[black] (2,2) node {\huge.};
		
		\draw[edge][PastelOrange] (2.1,1) to (3.9,1);
		\draw[edge][PastelOrange] (1.9,1) to (0.1,1);
		\draw[edge][PastelOrange, bend left] (-0.1,1) to (-0.9,1);
		\draw[edge, CornflowerBlue, bend left] (-0.9,1) to (-0.1,1);
		
		\draw[edge, CornflowerBlue, dashed] (5.2,1) to (5.9,1);
		\draw[edge, CornflowerBlue, dashed] (7.2,1) to (7.9,1);
		
		\draw[CornflowerBlue] (8,2) node {$\Tilde{Q}_k$};
		\draw[CornflowerBlue] (-0.3,2) node {$\Tilde{Q}_j$};
		\draw[PastelOrange] (7,0.5) node {$P_k$};
		\draw[PastelOrange] (-0.3,0.5) node {$P_j$};
	\end{tikzpicture}
		\caption{If the subgraph $A$ is trivial:
		The directed $A$--$U$~paths $P_j$ and $P_k$ intersect only in $r$. The directed paths $\Tilde{Q}_j$ and $\Tilde{Q}_k$ have a terminal segment in common and are laced with $P_j$ and $P_k$, respectively. Moreover, $P_j$ intersects $\Tilde{Q}_k$ only in $r$ and $P_k$ intersects $\Tilde{Q}_j$ only in $r$.
	}
	\end{subfigure}
	
	\begin{subfigure}{\textwidth}
		\centering
		\begin{tikzpicture}
			
			\node[draw, circle, minimum size=2cm, fill=lightgray] at (2,1) {$A$};
			
			\foreach \a in {3,...,8}{
				\draw[edge][PastelOrange] ({\a+0.1},1) to ({\a+0.9},1);
				\draw[black] (\a,1) node {\huge .};
				\draw[black] ({\a+1},1) node {\huge .};
			}
			
			\draw[edge][CornflowerBlue] (4,1.1) to[bend right] (2.1,3);
			\draw[edge][CornflowerBlue] (8,1.1) to[bend right] (5,1.1);
			\draw[edge][CornflowerBlue] (6,0.9) to[bend left] (4,0.9);
			\draw[edge][CornflowerBlue] (9,0.9) to[bend left] (7,0.9);
			\draw[edge][CornflowerBlue] (0,1.1) to[bend left] (1.9,3);
			\draw[edge][CornflowerBlue] (2,2.9) to (2,2);

			\draw (1.8,2.2) node {$r$};
			
			\draw[black] (2,2) node {\huge.};
			\draw[black] (0,1) node {\huge.};
			\draw[black] (-1,1) node {\huge.};
			\draw[black] (2,3) node {\huge.};
			\draw[black] (1,1) node {\huge.};
			
			\draw[edge][PastelOrange] (0.9,1) to (0.1,1);
			\draw[edge][PastelOrange, bend left] (-0.1,1) to (-0.9,1);
			\draw[edge, CornflowerBlue, bend left] (-0.9,1) to (-0.1,1);
			
			\draw[edge, CornflowerBlue, dashed] (5.2,1) to (5.9,1);
			\draw[edge, CornflowerBlue, dashed] (7.2,1) to (7.9,1);
			
			\draw[CornflowerBlue] (8,2) node {$\Tilde{Q}_k$};
			\draw[CornflowerBlue] (-0.3,2) node {$\Tilde{Q}_j$};
			\draw[PastelOrange] (7,0.5) node {$P_k$};
			\draw[PastelOrange] (-0.3,0.5) node {$P_j$};
		\end{tikzpicture}
				\caption{If the subgraph $A$ is non-trivial:
			The directed $A$--$U$~paths $P_j$ and $P_k$ are disjoint. The directed paths $\Tilde{Q}_j$ and $\Tilde{Q}_k$ have a terminal segment in common and are laced with $P_j$ and $P_k$, respectively. Moreover, $P_j$ avoids $\Tilde{Q}_k$ and $P_k$ avoids $\Tilde{Q}_j$.
		}
	\end{subfigure}
	
	\caption{The structure in the proof of~\cref{lem:star_or_back_path}.
	}
	\label{fig:tilde}
\end{figure}

\section{Proof of \cref{main_theorem}}\label{sec:proof}
We turn our attention to the proof of the main theorem:
\infinite*
	\begin{proof}
		By \cref{lemgroundwork2}, there exist a subgraph $A$ of type~\labelcref{itm:centre_1,itm:centre_2,itm:centre_3,itm:centre_4} and infinitely many directed $A$--$U$~paths that are disjoint, if $A$ is of type~\labelcref{itm:centre_2,itm:centre_3,itm:centre_4}, or non-trivial and disjoint outside $A$, if $A$ is of type~\labelcref{itm:centre_1}.
		By \cref{lem:star_or_back_path}, there exists either a butterfly minor shaped by a star with all leaves in~$U$ or a family $(P_i: i \in \NN)$ of directed $A$--$U$~paths and a family $(Q_i: i \in \NN)$ of directed paths such that
		\begin{itemize}
			\item $Q_i$ starts in the endvertex of $P_i$ for every $i \in \NN$,
			\item  $Q_i$ has precisely its endvertex in $V(A)$ for every $i \in \NN$,
			\item $P_i$ and $Q_i$ are laced for every $i \in \NN$, and
			\item $(P_i \cup Q_i) \cap (P_j \cup Q_j) = \emptyset$ for every $i \neq j \in \NN$.
		\end{itemize}
		Since we are done in the former case, we can assume the latter case.
		For each $i \in \NN$, let $a_i$ be the startvertex of $P_i$, let $u_i$ be the endvertex of $P_i$ and let $b_i$ be the endvertex of $Q_i$.
		We remark that the vertices of $(a_i: i \in \NN)$ are distinct and the vertices of $(b_i: i \in \NN)$ are distinct.
		Note that $A$ is not of type~\labelcref{itm:centre_1} witnessed by the infinitely many elements $(a_i: i \in \NN)$ in $A$.
		
		We consider the subgraph $D':= A \cup \bigcup_{i \in \NN} (P_i \cup Q_i)$.
		Let $D''$ be the butterfly minor of $D'$ obtained by butterfly contracting each $P_i \cup Q_i$ to a double path $F_i$ and edges $(a_i, x_i), (x_i, b_i)$, where $F_i$ has endpoints $x_i$ and $u_i$. 
		This is possible by~\cref{obs:double_path}.
		Now we construct the desired butterfly minor depending on the type of $A$ and the distribution of $(a_i: i \in \NN)$ and $(b_i: i \in \NN)$ in $A$.
		\begin{description}
			\item[If $A$ is of type~\labelcref{itm:centre_2}] Since the vertices of $(a_i: i \in \NN)$ are distinct and the vertices of $(b_i: i \in \NN)$ are distinct, there exists an infinite subset $I \subseteq \NN$ such that for every $i < j \in I$ there exists $f(i,j) \in \NN$ such that the vertices $a_i$ and $b_i$ are contained in the weak component of $A - P_{f(i,j)}$ containing $c$, and $a_j$ and $b_j$ are contained in the other weak component of $A - P_{f(i,j)}$.
			This implies that for every $i \neq j \in I$, $a_i$ and $b_j$ are not both contained in some directed path $P_n$.
			For simplicity, we assume without loss of generality that $I = \NN$.
			
			If there exists an infinite subset $J \subseteq \NN$ such that for each $j \in J$ there is some $n \in \NN$ such that $a_j$ and $b_j$ are internal vertices of $P_n$ with $b_j \leq_{P_n} a_j$, then we can butterfly contract $b_j P_n a_j$ so that $b_j = a_j$ for every $j \in J$.
			Then there exist an in-tree $T^{\mathrm{in}}$ and an out-tree $T^{\mathrm{out}}$ in $A$ that intersect only in their common root $u$ and in the vertices $a_j = b_j$ for every $j \in J$.
			See~\cref{fig:bleqa}.
			We consider $T^{\mathrm{in}} \cup T^{\mathrm{out}} \cup \bigcup_{j \in J} (F_j+(a_j, x_j) + (x_j, b_j))$ and butterfly contract $T^{\mathrm{in}}$ to an in-star and $T^{\mathrm{out}}$ to an out-star.
			This butterfly minor is shaped by a star with all teeth in $U$, as desired.
			
				\begin{figure}[ht]
				\begin{tikzpicture}
					
					\draw[edge][LavenderMagenta] (-0.95,2.5) to (-0.05,3.95);
					\draw[edge][CornflowerBlue] (0,1) to (-0.95,2.45);
					\draw[LavenderMagenta] (-1,2.5) node {\huge.}; 
					\draw (-1,2) node {$u$};
					
					\foreach \a in {0,4}{
						\draw[edge][LavenderMagenta] ({\a+2},3.95) to ({\a+2},3.05);
						\draw[edge][LavenderMagenta] (\a,3.95) to (\a,3.05); 
						\draw[edge][LavenderMagenta] ({\a+0.05},4) to ({\a+1.95},4);
						\draw[edge][LavenderMagenta] ({\a+2.05},4) to ({\a+3.95},4);
						\draw[edge][CornflowerBlue] ({\a+1.95},1) to ({\a+0.05},1);
						\draw[edge][CornflowerBlue] ({\a+3.95},1) to ({\a+2.05},1);
						\draw[edge][CornflowerBlue] ({\a},1.95) to ({\a},1.05);
						\draw[edge][CornflowerBlue] ({\a+2},1.95) to ({\a+2},1.05);
						\draw[edge][dashed] ({\a},2.95) to ({\a},2.05);
						\draw[edge][dashed] ({\a+2},2.95) to ({\a+2},2.05);
						\draw[edge][PastelOrange] ({\a+0.05},2.05) to ({\a+0.5},2.45);
						\draw[edge][PastelOrange] ({\a+0.5},2.55) to ({\a+0.05},2.95);
						\draw[PastelOrange] ({\a+0.5},2.5) node {\huge.};
						\draw[edge][PastelOrange] ({\a+2.05},2.05) to ({\a+2.5},2.45);
						\draw[edge][PastelOrange] ({\a+2.5},2.55) to ({\a+2.05},2.95);
						\draw[PastelOrange] ({\a+2.5},2.5) node {\huge.};
					}
					\foreach \a in {1,2,3,4}{
						\draw[LavenderMagenta] ({\a*2-2},3) node {\huge.};
						\draw[LavenderMagenta] ({\a*2-2},2) node {\huge.};
						\draw ({\a*2-1.5},3) node {$b_\a$};
						\draw ({\a*2-1.5},2) node {$a_\a$};
						\draw ({\a*2-1},2.5) node {$x_\a$}; 
						\draw[black] ({\a*2-2},1) node {\huge.};
					}
	
					\draw[edge][white] (6.05,4) to (7.95,4);				
					\draw[edge][LavenderMagenta, path fading=east] (6.05,4) to (8,4);

					\draw[][white] (8,1) to (6.15,1);				
					\draw[edge][CornflowerBlue, path fading=east] (8,1) to (6.05,1);

					\draw[CornflowerBlue] (8.5,1) node {$T^{\mathrm{in}}$};
					\draw[LavenderMagenta] (8.5,4) node {$T^{\mathrm{out}}$};
					
				\end{tikzpicture}
				\caption{The out-arborescence $T^{\mathrm{out}}$ and the in-arborescence $T^{\mathrm{in}}$ in $R_1 \cup R_2 \cup \bigcup_{n \in \NN} P_n \cup \bigcup_{i \in J} ((a_i, x_i) + (x_i, b_i))$ if $b_i \leq_{P_n} a_i$ for all $i \in J$ in the proof of~\cref{main_theorem}. The dashed edges will be butterfly contracted.				
				}
				\label{fig:bleqa}
			\end{figure}
			
			Thus we can assume that there exists an infinite subset $J \subseteq \NN$ such that $a_j$ and $b_j$ are internal vertices of the same path $P_n$ only if $a_j <_{P_n} b_j$.
			We butterfly contract $P_n a_j$ to its startvertex for every $a_j$ that is internal vertex of some $P_n$.
			Similarly, we butterfly $b_j P_n$ to its endvertex for every $b_j$ that is internal vertex of some $P_n$.
			Thus we can assume that $a_j$ and $b_j$ are contained in $V(R_1) \cup V(R_2)$.
			
			Then there exists an infinite subset $J' \subseteq J$ and $\epsilon, \delta \in \{1,2\}$ such that $a_j \in V(R_\epsilon)$ and $b_j \in V(R_\delta)$ for every $j \in J'$.
			If $\epsilon = \delta$, then we are done by \cref{lemhelp} and~\cref{obs:reversing}.
			See~\cref{fig:oneside}.
			
							\begin{figure}[ht]
\begin{tikzpicture}
	
	\draw[edge][LavenderMagenta] (-0.95,2.5) to (-0.05,3.45);
	\draw[edge][CornflowerBlue] (0,1.5) to (-0.95,2.45);
	\draw[LavenderMagenta] (-1,2.5) node {\huge.};
	\draw (-1,2) node {$u$};
	
	\foreach \a in {0,4}{
		\draw[edge][LavenderMagenta] ({\a+2},3.5) to ({\a+2},1.55);
		\draw[edge][LavenderMagenta] (\a,3.5) to (\a,1.55);
		\draw[edge][LavenderMagenta] ({\a+0.05},3.5) to ({\a+1.95},3.5); 
		\draw[edge][LavenderMagenta] ({\a+2.05},3.5) to ({\a+3.95},3.5); 
		\draw[edge][CornflowerBlue] ({\a+1.95},1.5) to ({\a+0.05},1.5); 
		\draw[edge][CornflowerBlue] ({\a+3.95},1.5) to ({\a+2.05},1.5);
	}
	\foreach \a in {1,2,3,4}{
		\draw[black] ({\a*2-2},3.5) node {\huge.};
		\draw[black] ({\a*2-2},1.5) node {\huge.};
	}
	
	\draw[edge][white] (6.05,3.5) to (7.95,3.5);				
	\draw[edge][LavenderMagenta, path fading=east] (6.05,3.5) to (8,3.5);
	
	\draw[][white] (8,1.5) to (6.15,1.5);				
	\draw[edge][CornflowerBlue, path fading=east] (8,1.5) to (6.05,1.5);
	
	\draw[CornflowerBlue] (8.5,1.5) node {$R_2$};
	\draw[LavenderMagenta] (8.5,3.5) node {$T^{\mathrm{out}}$};
	
\end{tikzpicture}
					\caption{The out-arborescence $T^{\mathrm{out}}$ in $R_1 \cup R_2 \cup \bigcup_{n \in \NN} P_n$ if $\delta = \epsilon = 2$ in the proof of~\cref{main_theorem}.}
				\label{fig:oneside}
			\end{figure}
			
			If $\epsilon = 1$, $\delta = 2$, $R_1$  is an out-arborescence that contains $(a_j: j \in J')$ and $R_2$ is an out-arborescence that contains $(b_j: j \in J')$.
			See~\cref{fig:twosides}.
			Note that $R_1$ and $R_2$ intersect only in their common root $u$.
			By restricting to $R_2 \cup R_1 \cup \bigcup_{j \in J'} (F_j+(a_j, x_j) + (x_j, b_j))$ and butterfly contracting $R_2 \cup R_1$ to a single vertex, we obtain a butterfly minor that is shaped by a star with all teeth in $U$, as desired.
			
										\begin{figure}[ht]
				\begin{tikzpicture}
					
					\draw[edge][LavenderMagenta] (-0.95,2.5) to (-0.05,3.45);
					\draw[edge][CornflowerBlue] (0,1.5) to (-0.95,2.45);
					\draw[LavenderMagenta] (-1,2.5) node {\huge.};
					\draw (-1,2) node {$u$};
					
					\foreach \a in {0,2,4}{
						\draw[edge][PastelOrange] ({\a},3.45) to ({\a+0.45},2.55); 
						\draw[edge][PastelOrange] ({\a+0.45},2.45) to ({\a},1.55); 
						\draw[edge][PastelOrange] ({\a+2},3.45) to ({\a+2.45},2.55); 
						\draw[edge][PastelOrange] ({\a+2.45},2.45) to ({\a+2},1.55); 
						\draw[edge][LavenderMagenta] ({\a+0.05},3.5) to ({\a+1.95},3.5); 
						\draw[edge][CornflowerBlue] ({\a+1.95},1.5) to ({\a+0.05},1.5);
					}
					\foreach \a in {1,2,3,4}{
						\draw[black] ({\a*2-2},3.5) node {\huge.}; 
						\draw[black] ({\a*2-2},1.5) node {\huge.}; 
						\draw[PastelOrange] ({\a*2-1.5},2.5) node {\huge.}; 
						\draw[black] ({\a*2-1},2.5) node {$x_\a$};
					}
					
					\draw[edge][LavenderMagenta, path fading=east] ({6.05},3.5) to ({7.95},3.5);
					\draw[edge][CornflowerBlue, path fading=east] ({7.95},1.5) to ({6.05},1.5); 
					\draw[edge][CornflowerBlue] ({6.45},1.5) to ({6.05},1.5); 
					
					\draw[CornflowerBlue] (8.5,1.5) node {$T^{\mathrm{in}}$}; 
					\draw[LavenderMagenta] (8.5,3.5) node {$T^{\mathrm{out}}$}; 
					
				\end{tikzpicture}
				\caption{The out-arborescence $T^{\mathrm{out}}$ and the in-arborescence $T^{\mathrm{in}}$  in $R_1 \cup R_2 \cup \bigcup_{i \in J'} ( (a_i, x_i) + (x_i, b_i))$ if $\epsilon =1, \delta = 2$ in the proof of~\cref{main_theorem}.}
				\label{fig:twosides}
			\end{figure}
			
			If $\epsilon = 2$ and $\delta =1$, we set $\beta(j):=f(j, j+1)$ and let $c_{\beta(j)}$ be the startvertex and $d_{\beta(j)}$ be the endvertex of $P_{\beta(j)}$ for every $j \in J'$.
			We consider $D''':=R_1 \cup R_2 \cup \bigcup_{j \in J'} P_{\beta(j)} \cup \bigcup_{j \in J'} (F_j+(a_j, x_j)+(x_j, b_j))$.
			See~\cref{fig:constructing_chain_of_triangles}.
			By butterfly contracting $b_j R_1 c_{\beta(j)}$ and $a_j R_2 d_{\beta(j)}$ to single vertices for every $j \in J'$, we obtain that $b_j = c_{\beta(j)}$ and $a_j = d_{\beta(j)}$ for every $j \in J'$.
			Furthermore, by suppressing all subdivision vertices of $D'''$ and deleting $u$, we obtain a chain of triangles with all teeth in $U$, as desired.
			
							\begin{figure}[ht]
				\begin{tikzpicture}
					
					\draw[edge][black] (-0.95,2.5) to (-0.05,3.45);
					\draw[edge][black] (0,1.5) to (-0.95,2.45);
					\draw[LavenderMagenta] (-1,2.5) node {\huge.};
					\draw (-1,2) node {$u$};
					
					\foreach \a in {0,4}{
						\draw[edge][PastelOrange] ({\a+0.45},2.55) to ({\a},3.45); 
						\draw[edge][PastelOrange] ({\a},1.55) to ({\a+0.45},2.45); 
				
						\draw[edge][black, dashed] ({\a+0.05},3.5) to ({\a+1.95},3.5); 
						\draw[edge][black, dashed] ({\a+1.95},1.5) to ({\a+0.05},1.5); 
					}
					\foreach \a in {1,2}{
						\draw[black] ({\a*4-4},3.5) node {\huge.}; 
						\draw[black] ({\a*4-4},1.5) node {\huge.};
						\draw[PastelOrange] ({\a*4-3.5},2.5) node {\huge.};
						\draw[black] ({\a*4-4},2.5) node {$x_\a$};
					}
					
					\draw[edge][black] (2.05, 3.5) to (3.95, 3.5);
					\draw[edge][black] (3.95, 1.5) to (2.05, 1.5);
					
					\draw[edge][black, path fading=east] ({6.05},3.5) to ({7.95},3.5);
					\draw[edge][black, path fading=east] ({7.95},1.5) to ({6.05},1.5);
					\draw[edge][black] ({6.45},1.5) to ({6.05},1.5);
					
					\draw[edge, CornflowerBlue] (2, 3.45) to (2,1.55);
					\draw[edge, CornflowerBlue] (6, 3.45) to (6,1.55);
					
					\draw[black] (8.5,1.5) node {$R_2$};
					\draw[black] (8.5,3.5) node {$R_1$};
					
					\draw[black] (2,1.5) node {\huge.};
					\draw[black] (2,3.5) node {\huge.};
					\draw[black] (6,1.5) node {\huge.};
					\draw[black] (6,3.5) node {\huge.};
					
					\draw[black] (0,1) node {$a_1$};
					\draw[black] (0,4) node {$b_1$};
					\draw[black] (2,1) node {$d_{\beta(1)}$};
					\draw[black] (2,4) node {$c_{\beta(1)}$};
					\draw[black] (4,1) node {$a_2$};
					\draw[black] (4,4) node {$b_2$};
					\draw[black] (6,1) node {$d_{\beta(2)}$};
					\draw[black] (6,4) node {$c_{\beta(2)}$};
					
				\end{tikzpicture}
					\caption{The structure of $R_1 \cup R_2 \cup \bigcup_{j \in J'} P_{\beta(j)} \cup \bigcup_{j \in J'} ((a_j, x_j) + (x_j, b_j))$ if $\epsilon=2, \delta=1$ in the proof of~\cref{main_theorem}. The dashed edges will be butterfly contracted.}
				\label{fig:constructing_chain_of_triangles}
			\end{figure}

			\item[If $A$ is of type~\labelcref{itm:centre_3}]
			This case is similar to the proof for type~\labelcref{itm:centre_2}:
			Again, we can assume without loss of generality for simplicity that for every $n \in \NN$ there exists $\beta(n) \in \NN$ such that the vertices $a_n, b_n$ are contained in the weak component of $A - P_{\beta(n)}$ that has finitely many vertices of $R$ and $a_{n+1}, b_{n+1}$ are contained in the weak component of $A - P_{\beta(n)}$ containing a tail of $R$.
			
			If there exists an infinite set $J \subseteq \NN$ such that $a_j, b_j$ are internal vertices of some $P_n$ with $b_j \leq_{P_n} a_j$ for every $j \in J$, we construct a butterfly minor shaped by a star similarly as in the proof for type~\labelcref{itm:centre_2}:
			By butterfly contracting $b_j P_n a_j$, we can assume $b_j = a_j$. Then we find arborescences $T^{\mathrm{in}}$ and $T^{\mathrm{out}}$ in $A$ rooted at $r$ that intersect only in $\{r\} \cup \{a_j = b_j: j \in J\}$.
			By butterfly contracting  $T^{\mathrm{in}}$ and $T^{\mathrm{out}}$, we obtain a butterfly minor shaped by a star with all teeth in $U$.
			
			Thus there exists an infinite subset $J \subseteq \NN$ such that $a_j, b_j$ are internal vertices of some directed path $P_n$ only if $a_j <_{P_n} b_j$.
			By butterfly contracting $P_n a_j$ for every $a_j$ in some $P_n$, we can assume that $a_j \in V(R)$ for every $j \in J$.
			If infinitely many elements of $(b_j: j \in J)$ are contained in $V(R)$, we find a butterfly minor shaped by a star with all teeth in $U$ by~\cref{lemhelp} and~\cref{obs:reversing}.
			Otherwise there exists an infinite subset $J' \subseteq J$ such that the vertices $(b_j: j \in J')$ are internal vertices of the directed paths $(P_n: n \in \NN)$.
			We find an in-arborescence $T^{\mathrm{in}}$ in $A$ rooted in $r$ that intersects $R$ only in $r$ and contains $(b_j: j \in J')$.
			By restricting to $R \cup T^{\mathrm{in}} \cup \bigcup_{j \in J'} (F_j + (a_j, x_j) + (x_j, b_j))$ and butterfly contracting $R$ and $T^{\mathrm{in}}$, we obtain the desired butterfly minor shaped by a star with all teeth in $U$.
			
			\item[If $A$ is of type~\labelcref{itm:centre_4}] Since all vertices of $(a_i: i \in \NN)$ are distinct and all vertices of $(b_i: i \in \NN)$ are distinct, there is an infinite subset $I \subseteq \NN$ such that for every $i < j \in I$ there exists $f(i,j) \in \NN$ with $a_i, b_i \in V(\bigcup_{m \in [f(i,j)]} C_m) \setminus V(C_{f(i,j)+1})$ and $a_j, b_j \in V(\bigcup_{m \in \NN \setminus [f(i,j)]} C_m) \setminus V(C_{f(i,j)})$.
			For simplicity, we assume without loss of generality that $I = \NN$ and set $\beta(n):=f(n, n+1)$.
			
			Let $D'''$ be the butterfly minor obtained from $D''$ by butterfly contracting the directed path $C_{\beta(n)} \cap C_{\beta(n) + 1}$ to a vertex $z_n$ for every $n \in \NN$.
			For every $n \in \NN$, let $D^n:= (F_n + (a_n,x_n) + (x_n, b_n)) \cup \bigcup_{m \in [f(n)]\setminus [f(n-1)]} C_m \subseteq D'''$, where $f(0):=0$.
			Note that $D''' = \bigcup_{n \in \NN} D^n$.
			Furthermore, $D^n \cap D^m = \emptyset$ for $|m -n| \geq 2$ and $D^n \cap D^{n+1} = \{z_n\}$ for every $n, m \in \NN$.
			
			For every $n \in 2 \NN + 1$ we take a butterfly minor $\hat{D}^n$ of $D^n$ that is a double path with endpoints $z_{n - 1}, z_n$, which exists by~\cref{prop:laced_paths,obs:double_path}.
			Furthermore, let $\hat{D}^1$ be a butterfly minor of $D^1$ that is a double path with endpoints $u_1$ and $z_1$, which exists by~\cref{prop:laced_paths,obs:double_path}.
  			For every $n \in 2 \NN$ we take a butterfly minor $\hat{D}^n$ of $D^n$ that is a double path containing $u_n, z_{n-1}, z_n$, the union of non-trivial double paths $F_1, F_2, F_3$ that intersect only in a common endpoint and whose other endpoints are $u_n, z_{n-1}, z_n$, or the union of a directed cycle $C$ of length $3$ and disjoint (possibly trivial) double paths $F_1, F_2, F_3$ having one endpoint in $V(C)$ and whose other endpoints are $u_n, z_{n-1}, z_n$, which exists by~\cref{prop3star}.
			See~\cref{fig:construct_comb}.
			Then there exists an infinite subset $K \subseteq 2 \NN$ such that either all elements of $(\hat{D}^k: k \in K)$ contain a directed cycle or no element of $(\hat{D}^k: k \in K)$ contains a directed cycle.
			In both cases, we contract $\hat{D}^n$ to a double path with endpoints $z_{n-1}$ and $z_n$ for every $n \in 2 \NN \setminus K$.
			Then $\bigcup_{n \in \NN} \hat{D}^n$ is the desired butterfly minor shaped by a comb.
			 \qedhere
		\end{description}

\begin{figure}[ht]
	\begin{tikzpicture}
		
		\draw[edge][] (0.25,0) to (0.75,0);
		\draw[edge] ({0.75},0) to ({0.25},0);
		\draw[edge] ({0.2},0.05) to ({0.2},0.95);
		\draw[edge] ({0.2},0.95) to ({0.2},0.05);
		\draw[edge][] (0.85,0) to (1.15,0);
		\draw[edge] ({1.15},0) to ({0.85},0);
		
		\draw[] ({0.8},0) node {.};
		\draw[] ({1.2},0) node {.};
		
		\draw[] ({0.2},0) node {.};
		\draw[LavenderMagenta] (0.2,1) node {\huge.};
		
		\foreach \a in {2,4,6}{

			\draw[LavenderMagenta] (\a,1) node {\huge.};

			\draw[edge] ({\a-0.15},0) to ({\a+0.15},0);
			\draw[edge] ({\a+0.15},0.05) to ({\a+0.05},0.35);
			\draw[edge] ({\a-0.05},0.35) to ({\a-0.15},0.05);

			\draw[edge] ({\a},0.45) to ({\a},0.95);
			\draw[edge] ({\a},0.95) to ({\a},0.45);
			
			\draw[] ({\a-0.2},0) node {.};
			\draw[] ({\a+0.2},0) node {.};
			\draw[] ({\a},0.4) node {.};	
		}
		
		\foreach \a in {1,...,5}{
			
			\draw[edge] ({\a+0.25},0) to ({\a+0.75},0);
			\draw[edge] ({\a+0.75},0) to ({\a+0.25},0);
		}
		
		\foreach \a in {3,5}{
			\draw[edge] (\a-0.15,0) to (\a + 0.15,0);
			\draw[edge] (\a+0.15,0) to (\a - 0.15,0);
			\draw[] (\a-0.2,0) node {.};
			\draw[] (\a+0.2,0) node {.};
		}
		
		\draw[edge] ({6.35},0) to ({6.25},0);
		\draw[edge,path fading=east] ({6.75},0) to ({6.25},0);
		
		\draw[decorate, decoration={brace,amplitude=5pt}] (1.2,-0.8) -- (0.2,-0.8) node[midway, below=3pt] {$\hat{D}^1$};
		
		\draw[decorate, decoration={brace,amplitude=5pt}] (2.2,-0.8) -- (1.3,-0.8) node[midway, below=3pt] {$\hat{D}^2$};
		
		\draw[decorate, decoration={brace,amplitude=5pt}] (3.7,-0.8) -- (2.3,-0.8) node[midway, below=3pt] {$\hat{D}^3$};
		
		\draw[decorate, decoration={brace,amplitude=5pt}] (4.2,-0.8) -- (3.8,-0.8) node[midway, below=3pt] {$\hat{D}^4$};
		
		\draw[decorate, decoration={brace,amplitude=5pt}] (5.2,-0.8) -- (4.3,-0.8) node[midway, below=3pt] {$\hat{D}^5$};
		
		\draw[decorate, decoration={brace,amplitude=5pt}] (6.2,-0.8) -- (5.3,-0.8) node[midway, below=3pt] {$\hat{D}^6$};
		
		\draw (1.25, -0.3) node {$z_1$};
		\draw (2.25, -0.3) node {$z_2$};
		\draw (3.75, -0.3) node {$z_3$};
		\draw (4.25, -0.3) node {$z_4$};
		\draw (5.25, -0.3) node {$z_5$};
		\draw (6.25, -0.3) node {$z_6$};

		\draw (0.2, 1.3) node {$u_1$};		
		\draw (2, 1.3) node {$u_2$};
		\draw (4, 1.3) node {$u_4$};
		\draw (6, 1.3) node {$u_6$};
		
	\end{tikzpicture}
		\caption{The butterfly minors $(\hat{D}^n)_{n \in \NN}$ if $A$ is of type~\labelcref{itm:centre_4} in the proof of~\cref{main_theorem}.}
				\label{fig:construct_comb}
\end{figure}

	\end{proof}

\section{The necessity of all types of directed graphs in~\cref{main_theorem}}\label{sec:necessity}
\cref{main_theorem} states six types of directed graphs, that is directed graphs shaped by a star whose centre is either a single vertex or a dominated directed ray, dominated directed rays, directed graphs shaped by a comb without directed cycles or with directed cycles, and chains of triangles.
\begin{prop}
	All six types of directed graphs in~\cref{main_theorem} are necessary for a statement like~\cref{main_theorem}.
\end{prop}
\begin{proof}
	Let $D$ be an arbitrary directed graph of one of these six types.
	Furthermore, let $D'$ be an arbitrary strongly connected butterfly minor of $D$ containing infinitely many teeth of $D$.
	We show that $D'$ is not of a different type than $D$.
	Let $\mu$ be a tree-like model of $D'$ in $D$.
	
	\begin{description}
		\item[If $D$ is a dominated directed ray]
		We show that $D'$ is a subdivision of a dominated directed ray with possibly some parallel edges.
		Thus $D'$ is not of a different type than $D$.
		
		Let $R$ be the ray of $D$ rooted in $r$ and let $e_v$ be the edges of $D$ between $r$ and $v$ for every $v \in V(R) \setminus \{r\}$.
		The subgraph $\mu(D')$ contains all edges of $R$ and infinitely many edges of $(e_v)_{v \in V(R)\setminus \{r\}}$ since $\mu(D')$ is strongly connected and infinite.
		For every edge $e_v$ that is present in $\mu(D')$ both endpoints of $e_v$ have in- and out-degree at least $1$ in $\mu(D') - e_v$.
		Thus no $e_v$ is contained in some branch set of $\mu$.
		This implies that all branch sets of $\mu$ are weakly connected subgraphs of $R$, and since $D'$ is infinite, they are subpaths of $R$.
		We can deduce that $D'$ is a subdivision of a dominated directed ray with possibly parallel edges.
			
		\item[If $D$ is shaped by a comb with a dominated directed ray in the centre]
		We show that $D'$ is the union of infinitely many disjoint double rays and a subdivision of a dominated directed ray with possibly some parallel edges.
		Then this implies that $D'$ is not of a different type than $D$.
		
		Let $R$ be the $r$-rooted ray of the dominated directed ray in $D$ and let $e_v$ be the edges between $r$ and $v$ for every $v \in V(R) \setminus \{r\}$.
		For every tooth $t$ of $D$ that $D'$ preserves, the double path connecting $t$ to the dominated directed ray is contained in $\mu(D')$ since $\mu(D')$ is strongly connected.
		Moreover, this double path is preserved in $D'$ since all its vertices either are $t$ or have either in- and out-degree at least $2$ in $\mu(D')$.
		
		The infinitely many disjoint double paths in $D'$ witness that infinitely many vertices of $R$ are preserved in $D'$.
		As in the previous case, none of the edges $(e_v)_{v \in V(R)\setminus \{r\}}$ is contained in a branch set of $\mu$, and thus all non-trivial branch sets of $\mu$ are subpaths of $R$.
		Then $D'$ is the union of infinitely many disjoint double rays and a subdivision of a dominated directed ray with possibly some parallel edges.
		
		\item[If $D$ is shaped by a star with a single vertex in the centre]
		For every tooth $t$ of $D$ that $D'$ preserves, the double path connecting $t$ to the centre vertex is contained in $\mu(D')$.
		Moreover, this double path is preserved in $D'$ since all its vertices either are $t$ or have either in- and out-degree at least $2$ in $\mu(D')$.
		This implies that there are infinitely many in-edges and infinitely many out-edges incident with the centre vertex in $D'$.
		Since directed graph of different types do not contain a vertex of infinite in-degree and infinite out-degree, $D'$ is not of a different type than $D$.
		
		\item[If $D$ is shaped by a comb without directed cycles]
		There there is an undirected ray $R$ rooted in some $v \in V(D)$ such that $\mu(D')$ contains $\mathcal{D}(R)$ since $\mu(D')$ is strongly connected.
		Since all vertices of $\mathcal{D}(R)$ but $v$ have in- and out-degree at least $2$ in $\mu(D')$, $\mathcal{D}(R) - v$ is contained in $D'$.
		Since no directed graph of a different type contains $\mathcal{D}(\hat{R})$ for some undirected ray $\hat{R}$, $D'$ is not of a different type than $D$.
		
		\item[If $D$ is shaped by a comb with directed cycles]
			For every tooth that $D'$ preserves, the corresponding directed cycle of length $3$ is contained in $\mu(D')$.
			For every but the first such directed cycle in $\mu(D')$, all its vertices have in- and out-degree $2$ in $\mu(D')$, and thus they are in distinct branch sets, which implies that the directed cycle is contained in $D'$.
			Thus $D'$ contains infinitely many directed cycles of length $3$.
			By~\cref{propconnectedness}, $D'$ does not contain two disjoint rays since $D$ does not have two disjoint rays, which implies that $D'$ is not a chain of triangles.
			We can deduce that $D'$ is not of a different type than $D$.
			
		\item[If $D$ is a chain of triangles]
			We show that $D'$ contains two disjoint rays, which implies that $D'$ is not of a different type than $D$.
			For simplicity, we assume without loss of generality that $D$ has the same vertex and edge labelling as in the introduction.
			See~\cref{fig:chain_of_triangles}.
			
			Since $\mu(D')$ is strongly connected, for every tooth $y_i$ that is preserved in $D'$ the edges $(b_i,c_i)$ and $(a_i, b_i)$ are contained in $\mu(D')$.
			Let $k \in \NN$ such that $y_k$ is preserved in $D'$.
			Then all edges $(c_i, c_{i+1})_{i \geq k}$ and $(a_{i+1}, a_i)_{i \geq k}$ are contained in $\mu(D')$ since $\mu(D')$ is strongly connected and contains infinitely many teeth of $D$.
			Moreover, infinitely many edges of $(c_j, a_j)_{i \geq k}$ are contained in $\mu(D')$.
			
			We construct an out-ray in $D'$ as follows:
			First, we show that infinitely many edges of $(c_i, c_{i+1})_{i \geq k}$ are not contained in branch sets of $\mu$.
			Let $\ell \geq k$ be arbitrary.
			Furthermore, let $\ell < i < j$ such that $(c_i,a_i)$ and $(b_j,c_j)$ are present in $\mu(D')$.
			Since $c_i$ has out-degree $2$ in $\mu(D')$ and $c_j$ has in-degree $2$ in $\mu(D')$, $c_i$ is contained in some out-branching and $c_j$ is contained in some in-branching of $\mu$.
			Then some edge $e \in ((c_m, c_{m +1}))_{i \leq m < j}$ is not contained in a branch set of $\mu$.
			Since $\ell$ was chosen arbitrarily, infinitely many edges of $((c_m, c_{m+1}))_{m \geq k}$ are not contained in branch sets.
			
			Let $\alpha: \NN \mapsto \NN_{\geq k}$ be the strictly increasing sequence such that $((c_{\alpha(n)},c_{\alpha(n)+1}))_{n \in \NN}$ contains precisely the edges of $((c_i, c_{i+1}))_{i \geq k}$ that are not contained in branch sets of $\mu$.
			Let $n \in \NN$ be arbitrary.
			Then the path $\bigcup_{\alpha(n)+1 \leq i < \alpha(n+1)} (c_i, c_{i+1})$ is contained in the branch set $\mu(v)$ for some $v \in V(D')$ and starts in the in-branching and ends in the out-branching of $\mu(v)$, which is witnessed by the edges $(c_{\alpha(n)},c_{\alpha(n)+1})$ and $(c_{\alpha(n+1)},c_{\alpha(n+1)+1})$.
			Thus the root of $\mu(v)$ is contained in $(c_i)_{\alpha(n)+1 \leq i \leq \alpha(n+1)}$.
			We deduce that there exists an out-ray $R$ in $D'$ whose edges are contained in $((c_i, c_{i+1}))_{i \geq k}$ and such that for each $v \in V(R)$ the branch set $\mu(v)$ has its root in $(c_i)_{i \geq k}$.
			
			Similarly, we can construct an in-ray $R'$ in $D'$ whose edges are contained in $((a_{i+1}, a_{i}))_{i \geq k}$ and such that for each $v \in V(R)$ the branch set $\mu(v)$ has its root in $(a_i)_{i \geq k}$.
			Note that $R$ and $R'$ are disjoint in $D'$. \qedhere
	\end{description}	
\end{proof}

\bibliography{ref.bib}

\end{document}